\begin{document}

\title[Blow-up behavior of the scalar curvature along the CKRF]
{Blow-up behavior of the scalar curvature along the conical K\"ahler-Ricci flow with finite time singularities} 

\author
{Ryosuke Nomura}

\address{Graduate School of Mathematical Sciences, The University of Tokyo \endgraf
	3-8-1 Komaba, Meguro-ku, Tokyo, 153-8914, Japan.}

\email{nomu@ms.u-tokyo.ac.jp}

\thanks{Classification AMS 2010: 53C55, 
	32W20.
}

\keywords{Conical K\"ahler-Ricci flow, twisted K\"ahler-Ricci flow, Monge-Amp\`ere equation, Cone metric, scalar curvature.
}

\begin{abstract}
	We investigate the scalar curvature behavior along the normalized conical K\"ahler-Ricci flow $\omega_t$, which is the conic version of the normalized K\"ahler-Ricci flow, with finite maximal existence time $T<\infty $. 
	We prove that the scalar curvature of $\omega_t$ is bounded from above by $C/(T-t)^2$ under the existence of a contraction associated to the limiting cohomology class $[\omega_T]$. 
	This generalizes Zhang's work to the conic case. 
\end{abstract}
	
\maketitle
\tableofcontents

\section{Introduction}

Let $X$ be a compact \kahler \ manifold of dimension $n$, $D$ be a smooth divisor on $X$, and $\beta $ be a positive real number satisfying $0<\beta<1$.
We consider the \nckrf \ $\omt$ on $X$ which is a family of cone metrics  with cone angle $2\pi \beta$ along $D$ satisfying the following evolution equation:
\begin{align}\label{nckrf}
	\begin{cases}
		\ \dt \omt &= - \rict - \omt + 2\pi ( 1 - \beta )[D], \\[7pt]
		\ \omt |_{t=0} &= \omstar,
	\end{cases}
\end{align} 
where $[D]$ is the current of integration over $D$, and $\omstar$ is a certain initial cone metric defined later (see (\refs{initialmetric})). 
In the case of $D=0$, $\omt $ is called the \nkrf. 
This case has been studied extensively in the past decades (see 
\cite{TianZhang06KRFProjGenType, SongTian09KRFthroughsing, SongTian12CanMeasKRF, ChenWang12SpaceRFI, ChenWang14SpaceRFII,BEG13IntrotoKRF, CheSunWang15KRFKEKstab, CollinsTosatti15KaCurrentNullLoci, GuoSongWeikove15GeometricConv} and the references therein). 

The maximal existence time $T$ of the \nckrf  \ is characterized by the following cohomological condition:
\begin{align*}
	T = \sup \{ t >0 \mid   [\omt ] =\emt [\omz] + (1-\emt) 2\pi c_1(K_X+(1-\beta )D)   \mbox{ \ is \kahler}\}, 
\end{align*}
which is shown by Shen~\cite{Shen14CKRF,Shen14C2alphaCKRF}. 
In particular, the limiting class $[\omT ]$ is nef but not \kahler.  
As $t$ tends to $T$, the flow $\omt$  forms singularities.  
The analysis of the singularities, in particular its curvature behavior, is one of the main objects in the study of the geometric flows. Our purpose here is to investigate the scalar curvature behavior of $\omt$ with finite time singularities (i.e. $T<\infty$) as $t$ approaches to $T$. 

In the infinite time singularities case (i.e. $T=\infty$), the uniform boundedness of the scalar curvature of the \nkrf \ (i.e. $D=0$) was 
proved by Zhang~\cite{Zhang09ScalKRFMinGenType} when the canonical bundle $K_X$ is nef and big. 
This result was extended by Song-Tian~\cite{SongTian11ScalKRF} when $K_X$ is semi-ample. 
Furthermore, Edwards~\cite{Edwards15ScalBoundCKRF} generalized these results to the conic setting.  
In the case of Fano manifolds, Perelman  (see \cite{SesumTian08Perelman}) established a uniform bound for  the scalar curvature along the  \nkrf \
 and Liu-Zhang~\cite{LiuZhang14CKRFFano} extended it to the conic case.

On the other hand, in the finite time singularities case (i.e. $T<\infty $), Collins and Tosatti~\cite{CollinsTosatti15KaCurrentNullLoci} proved that  the scalar curvature of the \krf \ $\omt$  (i.e. $D=0$) blows up along the null locus of the limiting class $[\omT]$. 
Zhang~\cite{Zhang10ScalFTSingKRF} showed that the scalar curvature $R(\omt)$ of the \nkrf \ $\omt$ satisfies
\begin{align*}
	R(\omt )\le \dfrac{C}{(T-t)^2}
\end{align*}
assuming the semi-ampleness of the limiting class $[\omega_T]$.
This condition is natural in terms of the deep relationship between the \krf \ and the minimal model program (see \cite{SongTian09KRFthroughsing,Zhang10ScalFTSingKRF}).
Our main theorem generalizes Zhang's result to the conic setting.

We assume the following contraction type condition on the limiting cohomology class $[\omega_T]$. 
Let $f \colon X \rightarrow Z$ be a holomorphic map between compact \kahler \ manifolds, whose image is contained in a normal irreducible subvariety $Y$ of $Z$.
Let $D_Y$ be an effective Cartier divisor on $Y$ such that the pullback of $D_Y$ satisfies $D = f^\ast D_Y$. 
Let $h_Y$ be a smooth Hermitian metric on the line bundle $\mathcal{O}_Y(D_Y)$ in the sense of  \cite[Section 5]{EGZ09SingKE}, 
and  $s_Y $ be a holomorphic section of $\mathcal{O}_Y(D_Y)$ whose zero divisor is $D_Y$.
We define the initial cone metric $\omstar$ by
\begin{align}\label{initialmetric}
	\omstar \deq \omz + k \ddbsb,
\end{align}
where $\omz$ is a smooth \kahler \ form on $X$, $k\in \rp$ is a sufficiently small real number, $s \deq f^\ast s_Y$ is the holomorphic section of $\mathcal{O}_X(D)$, and $h\deq f^\ast h_Y$ is the smooth Hermitian metric on $\mathcal{O}_X(D)$. We remark that if we take $k$ sufficiently small, $\omstar$ is actually a cone metric with cone angle $2\pi \beta$ along $D$.

Let $\omt $ be the \nckrf \ with initial cone metric $\omstar$, and $T$ be the maximal existence time of $\omt$. We further assume that $T$ is finite and there exists a smooth \kahler \ form $\omega_Z$ on $Z$ satisfying 
\begin{align*}
	[f^\ast \omega_Z] = [\omega_T] \in H^{1,1}(X, \rr ).
\end{align*}
Under these assumptions, we have the following theorem.
\begin{theorem}\label{thmA}
		The scalar curvature $R( \omt )$ of $\omt$ satisfies
	\begin{align*}
		R( \omt ) \le \frac{C}{(T-t)^2} \son X\setminus D,
	\end{align*}
		where $C>0$ is a constant independent of $t$.
\end{theorem}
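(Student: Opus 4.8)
The plan is to recast the flow \eqref{nckrf} as a scalar parabolic complex Monge--Amp\`ere equation, read the scalar curvature off the potential, and reduce the theorem to an a priori bound proved by the maximum principle. On $X\setminus D$ I would fix the reference family
\[
\hat\omega_t:=\frac{e^{-t}-e^{-T}}{1-e^{-T}}\,\omega_0+\frac{1-e^{-t}}{1-e^{-T}}\,f^\ast\omega_Z\in[\omega_t],
\]
which interpolates between $\omega_0$ at $t=0$ and the degenerate limit $f^\ast\omega_Z$ at $t=T$ and satisfies $\hat\omega_t+\partial_t\hat\omega_t=\chi$, where $\chi:=(1-e^{-T})^{-1}(f^\ast\omega_Z-e^{-T}\omega_0)$ represents $2\pi c_1(K_X+(1-\beta)D)$. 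Fixing a conic volume form $\Omega$ (built from $\omega_0$ and $|s|_h^{2(1-\beta)}$) with $\mathrm{Ric}(\Omega)=-\chi$ on $X\setminus D$, the flow becomes $\partial_t\varphi=\log(\omega_t^n/\Omega)-\varphi$ with $\omega_t=\hat\omega_t+i\partial\bar\partial\varphi$. Setting $u:=\log(\omega_t^n/\Omega)=\dot\varphi+\varphi$ and differentiating the equation gives the two identities I would use throughout,
\[
\mathrm{Ric}(\omega_t)=-i\partial\bar\partial u-\chi,\qquad R(\omega_t)=-\mathrm{tr}_{\omega_t}\mathrm{Ric}(\omega_t)=-\Delta_{\omega_t}u-\mathrm{tr}_{\omega_t}\chi,
\]
so that the theorem is equivalent to the lower bound $\mathrm{tr}_{\omega_t}\mathrm{Ric}(\omega_t)\ge -C/(T-t)^2$.

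I would then assemble the a priori estimates. From the conic Monge--Amp\`ere theory recalled above, $\|\varphi\|_{L^\infty}\le C$ and $\dot\varphi\le C$, hence $u\le C$. A parabolic Schwarz lemma of Song--Tian / Chern--Lu type for the map $f\colon X\to Z$, using that the compact $Z$ has $\omega_Z$ of bounded curvature, yields $\mathrm{tr}_{\omega_t}f^\ast\omega_Z\le C$; with $-C\omega_0\le-\chi\le C\omega_0$ this already controls $\mathrm{tr}_{\omega_t}\chi$ by $\pm C\,\mathrm{tr}_{\omega_t}\omega_0$. The heart of the argument is then the second-order estimate
\[
\mathrm{tr}_{\omega_t}\omega_0\le\frac{C}{(T-t)^2},
\]
which I would obtain from a parabolic Aubin--Yau/Chern--Lu computation for $\log\mathrm{tr}_{\omega_t}\omega_0$ relative to the fixed background $\omega_0$; the rate $(T-t)^{-2}$ enters precisely through the degeneration $\hat\omega_t\asymp f^\ast\omega_Z+c\,(T-t)\,\omega_0$ in the fiber directions of $f$. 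Granting in addition a Ricci lower bound $\mathrm{Ric}(\omega_t)\ge -C\omega_0$, which I would extract from $\mathrm{Ric}(\omega_t)=-i\partial\bar\partial u-\chi$ together with an upper bound for the complex Hessian of $u$, the identity $R(\omega_t)=-\mathrm{tr}_{\omega_t}\mathrm{Ric}(\omega_t)\le C\,\mathrm{tr}_{\omega_t}\omega_0$ closes the estimate and delivers $R(\omega_t)\le C/(T-t)^2$ on $X\setminus D$.

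The main obstacle is twofold. First, one cannot bound $R(\omega_t)$ from its own evolution $\partial_t R=\Delta_{\omega_t}R+|\mathrm{Ric}(\omega_t)|^2+R$, because the reaction term $|\mathrm{Ric}(\omega_t)|^2$ has the wrong sign for an upper bound; the collapsing structure must be exploited indirectly, through the metric and Hessian estimates above rather than through $R$ itself. Second, and specific to the conic setting, $\omega_t$ and $R(\omega_t)$ are singular along $D$, so none of the maximum-principle steps may be applied on $X$ as they stand. I would run each of them on $X\setminus D$ after adding a barrier such as $\varepsilon\log|s|_h^2$ (or a small negative power of $|s|_h^2$ compatible with the cone geometry), which forces the extremum into the region where everything is smooth; uniform control of the resulting terms as $\varepsilon\to0$, consistently with the cone angle $2\pi\beta$ along $D$, is the principal technical point that the smooth case of Zhang does not encounter.
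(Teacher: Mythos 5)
Your overall framing (rewriting the flow as a parabolic Monge--Amp\`ere equation for a potential, the reference family $\hat\omega_t$, a parabolic Schwarz lemma for $f$, and a regularization of the cone singularity) matches the paper's setup, but the core of your argument does not close, for three concrete reasons. First, the two estimates you lean on are unjustified and are not known to hold. The bound $\mathrm{tr}_{\omega_t}\omega_0\le C/(T-t)^2$ cannot be extracted from the parabolic Aubin--Yau/Chern--Lu computation you invoke: since $\hat\omega_t\ge c(T-t)\,\omega_0$ is the only available positivity of the reference form, the standard argument applied to $\log\mathrm{tr}_{\omega_t}\omega_0-A\varphi$ forces $A\sim(T-t)^{-1}$ and, after exponentiating the oscillation of $\varphi$, returns a bound of order $e^{C/(T-t)}$, not a polynomial one; neither Zhang's proof nor this paper establishes (or needs) any polynomial control of $\mathrm{tr}_{\omega_t}\omega_0$. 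Likewise the uniform bound $\mathrm{Ric}(\omega_t)\ge-C\omega_0$ is a far stronger assertion than the theorem itself; it would require a uniform upper bound on $i\partial\bar\partial u$ against $\omega_0$ up to the singular time, which is not available. Second, there is a sign error that undoes your final step: $R(\omega_t)=+\,\mathrm{tr}_{\omega_t}\mathrm{Ric}(\omega_t)$ (this is consistent with your own identity $R=-\Delta_{\omega_t}u-\mathrm{tr}_{\omega_t}\chi$), so the chain $-\mathrm{tr}_{\omega_t}\mathrm{Ric}(\omega_t)\le C\,\mathrm{tr}_{\omega_t}\omega_0$ bounds $-R$, i.e.\ it gives a \emph{lower} bound for the scalar curvature --- the easy direction, already supplied here by Edwards --- rather than the upper bound claimed.

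The missing idea is the time-weighted potential that actually produces the rate. Setting $v=(1-e^{t-T})\dot\varphi+\varphi$ (plus the regularization term $k\rho_\varepsilon$ in the approximating twisted flows), one has the identity
\[
(1-e^{t-T})\bigl(R(\omega_t)-\mathrm{tr}_{\omega_t}\eta\bigr)=-\Delta_{\omega_t}v+n e^{t-T}-\mathrm{tr}_{\omega_t}f^\ast\omega_Z ,
\]
so the theorem reduces to the Laplacian lower bound $\Delta_{\omega_t}v\ge -C/(T-t)$: one factor of $(T-t)^{-1}$ comes from that bound and the other from $(1-e^{t-T})^{-1}\le C_T/(T-t)$. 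The Laplacian bound is in turn obtained by a chain of maximum-principle estimates --- $\|v\|_{C^0}\le C$; then $\mathrm{tr}_{\omega_t}f^\ast\omega_Z\le C$ via the parabolic Schwarz lemma applied to the \emph{degenerate} pullback $f^\ast\omega_Z$ rather than to $\omega_0$; then $|\nabla v|^2_{\omega_t}\le C$; then the quantity $(T-t)\Phi+2C_T\tilde\Psi+2C_T u$ --- all run on the smooth twisted flows $\omega_{\varepsilon,t}$ built from the Campana--Guenancia--Paun regularization of $|s|_h^{2\beta}$ and passed to the limit by Shen's convergence theorem, rather than by inserting a barrier $\varepsilon\log|s|_h^2$ into each maximum principle as you propose. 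Without the weighted quantity $v$ and its Laplacian estimate, the $(T-t)^{-2}$ rate has no source in your argument.
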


In contrast with Zhang's result, we need to treat with the singularities of $\omt$ along $D$. This is overcome by using the approximation technique used in \cite{CampanaGuenanciaPaun13ConeSingHolTensor,Shen14CKRF,LiuZhang14CKRFFano,Edwards15ScalBoundCKRF}. 

\begin{remark}
		If we replace $(1-\beta )D$ by $\sum _{i \in I}(1-\beta_i)D_i$, where $D_i$ are smooth divisors intersecting transversely, the same argument below gives the same conclusion. But for simplicity, we only treat one smooth divisor case.
\end{remark}

\section{Approximation of the \nckrf \ by the twisted normalized K\"ahler-Ricci flow}
	
In the following argument, we assume that the conditions in Theorem \refs{thmA} are always satisfied. 
We first define a family of reference smooth \kahler \ forms $\omht$ whose cohomology classes are equal to $[\omt]$. 
We set $\omhinf $ by
\begin{align*}
	\omhinf 
		&\deq -\frac{\emT}{1-\emT } \omz + \frac{1}{1-\emT} f^\ast \omega_Z \\
		&\, \>  \in -\frac{\emT}{1-\emT } [\omz] +\frac{1}{1-\emT}[\omT ]= \ckd, 
\end{align*}
and $\omht$ by
\begin{align}\label{omht}
	\omht  \deq
		&\ \emt \omz + (1-\emt) \omhinf 
	=\ a_t \omz +(1-a_t) \omhT,
\end{align}
where $a_t \deq (\emt -\emT)/(1-\emT)$.
Since $\omhT = f^\ast \omega_Z\ge 0$ is semi-positive, $\omht$ are smooth \kahler \ forms for any $t \in [0,T)$. The cohomology class of $\omht$ coincide with $[\omt]$.

We next define a family of reference smooth \kahler \ forms $\omtilet$ whose cohomology classes are equal to $[\omt]$. We use the approximation method as in \cite{Shen14CKRF,LiuZhang14CKRFFano,Edwards15ScalBoundCKRF} originated from \cite{CampanaGuenanciaPaun13ConeSingHolTensor}.
We denote $\rhoe \deq \chi(\st ,\varepsilon^2)$, where
\begin{align*}
    \chi(u,\varepsilon^2) \deq \beta\int_{0}^{u} \dfrac{(r+\varepsilon^2)^\beta - \varepsilon^{2\beta} }{r}dr.
\end{align*}
Then, $\rhoe$ are smooth functions on $X$ and converge to $\sbb $ in $C^\infty_{\mathrm{loc}}(X\setminus D)$ as $\varepsilon \rightarrow 0$. 
We define reference  smooth K\"ahler forms $\omtilet$ by
\begin{align}\label{omtilet}
    \omtilet
    &\deq \omht + k \ddb \rhoe 
    =a_t \omtilez +(1-a_t)\omtileT .
\end{align}
We prove that if we take $k$ sufficiently small, $\omtilet $ is positive for all $t\in \zt$.
Let \const{y}$\refconst{y}>0$ be a constant satisfying
\begin{align}\label{rhy}
	-\refconst{y} \omZ \le \ii R_{h_Y}\le \refconst{y} \omZ \son Y,
\end{align} 
where $ R_{h_Y}$ is the Chern curvature of $h_Y$. Since $h=f^\ast h_Y$ and $\omhT = f^\ast \omZ$, we have
\begin{align}\label{rh}
	-\refconst{y} \omhT \le \ii R_h \le \refconst{y} \omhT \son X.
\end{align}
Let \const{s}$\refconst{s}>0$ and \const{z}$\refconst{z}>1$ be constants such that
\begin{align}
\label{supsy}	\sup_Y |s_Y|_{h_Y} &\le \refconst{s}, \\
\label{omhT}	\omhT = f^\ast \omZ &\le \refconst{z}\omz \son X . 
\end{align}
By (\refs{supsy}), there exists a constant \const{rho}$\refconst{rho}>0$ independent of $\varepsilon$ such that
\begin{align}\label{urho}
    0 \le \rhoe \le \refconst{rho}  \son X.
\end{align}
By the computation in \cite[Section 3]{CampanaGuenanciaPaun13ConeSingHolTensor}, we have 
\begin{align}\label{ddbrho}
    \ddb \rhoe 
  	  &=\beta^2 \dfrac{\ii \langle \nabla s \wedge \nabla s \rangle_h}{\sobe} - \beta \bigl(\sbe -\varepsilon^{2\beta}\bigr)\ii R_h\\
    \notag 	
  	  &\ge -\beta \refconst{y}\refconst{s}^{2\beta} \omhT,
\end{align}
where $\nabla $ is the Chern connection of the line bundle $(\mathcal{O}_X(D),h)$, $R_h$ is its Chern curvature, and $\ii \langle \nabla s \wedge \nabla s\rangle_h$ is a semi-positive closed real $(1,1)$-form combining the wedge product of differential forms with the  Hermitian metric $h$ on $\mathcal{O}_X(D)$.
By (\refs{omtilet}), (\refs{ddbrho}), and (\refs{omhT}), we obtain the following inequalities: 
\begin{align}
    \omtileT
	    &= \omhT +k \ddb \rhoe
		    \ge (1-k\beta \refconst{y}\refconst{s}^{2\beta } )\omhT
			\ge  (1-k\beta \refconst{y}\refconst{s}^{2\beta }\refconst{z} )\omhT,\\
\label{omtilzlower}	\omtilez
	    &=\omz +k \ddb \rhoe 
		    \, \ge \omz -k\beta \refconst{y}\refconst{s}^{2\beta }\omhT
			\> \ge  (1-k\beta \refconst{y}\refconst{s}^{2\beta }\refconst{z} )\omz.
\end{align}
Finally, these inequalities give the positivity of $\omtilet$ for any $t\in \zt$: 
\begin{align*}
\omtilet
	&=\omht + k \ddb \rhoe 
	=a_t \omtilez +(1-a_t ) \omtileT
	\ge  (1-k\beta \refconst{y}\refconst{s}^{2\beta }\refconst{z} )\omht >0.
\end{align*}

By using these reference smooth \kahler \ forms, we consider the following flow of potentials:
\begin{align}\label{peteq}
	\begin{cases}
		\ \dt \pet 
			&=\log \dfrac{(\omtilet +\ddb \pet)^n}{\Omega } - \pet 
				+(1-\beta )\log(\ste) -k\rhoe, \\[7pt]
		\ \pet|_{t=0}
			&=0,
	\end{cases}
\end{align}
where $\Omega $ is a smooth volume form on $X$ satisfying
\begin{align*}
	-\ricOm +(1-\beta )\ii R_h &= \omhinf \ \in \ckd.
\end{align*}
We set $\omet$ by 
\begin{align}\label{ometdef}
	\omet \deq \omtilet +\ddb \pet.
\end{align}
Then, $\omet$ satisfies the following twisted \krf: 
\begin{align}\label{ometeq}
	\begin{cases}
		\ \dt \omet &= - \ricet - \omet + \ete, \\[7pt]
		\ \omt |_{t=0} &= \omtilez(\deq \omz +k\ddb \rhoe),
	\end{cases}
\end{align}
where $\ete \deq (1-\beta)\ddb \log (\ste )+ (1-\beta )\ii R_h$. 
We remark that $\ete$ converges to $2\pi (1-\beta )[D]$  in $C^\infty_{\mathrm{loc}}(X\setminus D)$ and as current on $X$ when $\varepsilon$ goes to $0$.

The validity of these approximations (\refs{peteq}), (\refs{ometeq}) is justified by the following theorem due to Shen~\cite{Shen14CKRF}. 
\begin{theorem}[{\cite[Section 2]{Shen14CKRF}}]\label{Shenthm}
	There exists a subsequence $\varepsilon_i$ converging to $0$ as $i\rightarrow \infty$ such that 
	$\omega_{\varepsilon_i,t}$ converges to $\omega_t$ in $C^\infty_{\mathrm{loc}}(X\setminus D)$ and as current on $X$.
\end{theorem}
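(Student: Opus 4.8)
The plan is to derive a priori estimates for the twisted potential flow \eqref{peteq} that are uniform in $\varepsilon$ on compact subsets of $X\setminus D$, extract a smoothly convergent subsequence by Arzel\`a--Ascoli, and then identify the limit with $\omt$ by uniqueness of the conical flow. First I would establish a uniform $C^0$ bound $\sup_{X\times[0,T']}|\pet|\le C$ for each $T'<T$, with $C$ independent of $\varepsilon$. Since $0\le\rhoe\le\refconst{rho}$ by \eqref{urho} and $(1-\beta)\log(\ste)$ is bounded above uniformly in $\varepsilon$, the parabolic maximum principle applied to \eqref{peteq} yields the upper bound at once; the lower bound follows by a comparison/barrier argument as in Shen, exploiting that the correction $-k\rhoe$ is bounded and that the flow structure controls $\pet$ from below despite the twisting term being unbounded below near $D$.

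Next I would control $\dt\pet$, and hence the volume ratio $\omet^n/\Omega$. A key structural feature is that the twisting terms $(1-\beta)\log(\ste)$ and $-k\rhoe$ in \eqref{peteq} are independent of $t$, so differentiating the equation in $t$ eliminates them; the maximum principle then gives a uniform upper bound for $\dt\pet$, which combined with the $C^0$ bound controls $\omet^n$ from above on $X\times[0,T']$. The heart of the argument is the second-order estimate on a fixed compact set $K\subset X\setminus D$. On such $K$ the reference forms $\omtilet$ are uniformly equivalent to $\omz$ and the twisting $(1-\beta)\log(\ste)$ converges smoothly, so a parabolic second-order (Aubin--Yau/Chern--Lu type) inequality bounds $\operatorname{tr}_{\omz}\omet$ uniformly in $\varepsilon$; together with the volume upper bound this yields a two-sided equivalence $C^{-1}\omz\le\omet\le C\,\omz$ on $K\times[0,T']$.

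With uniform ellipticity on each $K$, the parabolic complex Monge--Amp\`ere structure of \eqref{peteq} allows the Evans--Krylov and Schauder theory to produce $C^{2,\alpha}_{\mathrm{loc}}$ bounds, and a bootstrap gives uniform $C^\infty_{\mathrm{loc}}(X\setminus D)$ bounds for $\pet$---these are exactly the estimates of Shen~\cite{Shen14CKRF,Shen14C2alphaCKRF}. Arzel\`a--Ascoli then extracts a subsequence $\varepsilon_i\to0$ along which $\pet$, and hence $\omet$, converge in $C^\infty_{\mathrm{loc}}(X\setminus D)$. To upgrade to current convergence on $X$, the uniform $C^0$ bound shows the limiting potential is bounded across $D$, so $\omet$ converge to a closed positive current; the smooth convergence off $D$ together with the uniform mass bound forces convergence in the sense of currents on all of $X$. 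Since the limit solves the conical parabolic Monge--Amp\`ere equation with cone angle $2\pi\beta$ along $D$, uniqueness of the conical K\"ahler--Ricci flow identifies it with $\omt$.

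The main obstacle is the second-order estimate near $D$: both $\omtilet$ and the twisting term degenerate along the divisor as $\varepsilon\to0$, so the ellipticity constants blow up approaching $D$ and all uniform estimates must be confined to compact subsets of $X\setminus D$, while the $C^0$ and volume bounds together with the current framework are needed separately to ensure that no mass concentrates along $D$ in the limit.
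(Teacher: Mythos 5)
The paper does not prove this statement at all: Theorem \refs{Shenthm} is quoted directly from \cite[Section 2]{Shen14CKRF}, so there is no in-paper argument to compare against. Your outline does reproduce the standard route that Shen (following the elliptic scheme of \cite{CampanaGuenanciaPaun13ConeSingHolTensor}) takes: uniform zeroth-order and time-derivative bounds by the maximum principle, local second-order estimates on compact subsets of $X\setminus D$, Evans--Krylov and bootstrapping, Arzel\`a--Ascoli, and identification of the limit with the conical flow.

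Two steps in your sketch are not yet proofs. First, the metric-equivalence step is logically wrong as stated: an upper bound on $\mathrm{tr}_{\omz}\omet$ together with an \emph{upper} bound on $\omet^n$ does not give $\omet\ge C^{-1}\omz$. You need either the opposite trace bound $\mathrm{tr}_{\omet}\omz\le C$ (the Chern--Lu direction) combined with the volume upper bound, or your trace bound combined with a volume \emph{lower} bound; the latter requires a uniform lower bound on $\petd$, which does not follow from the time-independence of the twisting terms (that argument only yields the upper bound, exactly as in Proposition \refs{petprop}(b)). Second, the uniform lower bound for $\pet$ is precisely where the $\varepsilon$-degeneration bites, since $(1-\beta)\log(\ste)$ tends to $-\infty$ along $D$ as $\varepsilon\to 0$; invoking ``a comparison/barrier argument as in Shen'' is naming the difficulty rather than resolving it, and one must actually produce the barrier (e.g.\ perturb $\pet$ by a uniformly quasi-plurisubharmonic function blowing up to $+\infty$ along $D$, using that $2(1-\beta)<2$ keeps the relevant volume form integrable). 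Finally, the identification of the limit with $\omt$ via ``uniqueness of the conical flow'' deserves care: in Shen's framework the conical flow is essentially \emph{constructed} as this limit, so either one takes that as the definition or one must invoke the (nontrivial) uniqueness theorem for solutions with conical singularities.
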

Thanks to this theorem, we only need to estimate $\pet$ and $\omet$.

\section{Overview of the proof of Theorem \refs{thmA}}\label{outlinesect}
	
In this section, we outline the proof of Theorem \refs{thmA}.  
First, we need the following formulas.
\begin{proposition}\label{retprop}
The Ricci curvature $\ricet$ and the scalar curvature $\ret$ satisfy the following formulas: 
		\begin{enumerate}
			\item [(a)]
			{
				\abovedisplayskip=-12pt
				\begin{align*}
					\oett\left(\ricet -\ete \right)
						= -\ddb \vet + \ett \omet -\omhT ,
				\end{align*}
			}
			\item [(b)]
						{
							\abovedisplayskip=-12pt
				\begin{align*}
					\oett (\ret - \trete)
						= -\lapet \vet + n\ett -\tromhT,
				\end{align*}
			}
		\end{enumerate}
	where $\vet \deq \oett \petd +\pet + k \rhoe.$
\end{proposition}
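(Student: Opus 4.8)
The plan is to establish (a) by applying $\ddb$ to the potential flow \refs{peteq} and then repackaging the resulting static identity with the time-dependent coefficients $\oett,\ett$ and the definition of $\vet$; statement (b) will then follow immediately by tracing (a) against $\omet$.

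First I would differentiate \refs{peteq} only in the space directions, i.e.\ apply $\ddb$ to both sides. Using the Kähler identity $\ddb \log \omet^n = -\ricet$, the defining property of $\Omega$ in the form $\ddb \log \Omega = \omhinf - (1-\beta)\ii R_h$, and the definition of $\ete$ in the form $(1-\beta)\ddb \log(\ste) = \ete - (1-\beta)\ii R_h$, the Chern curvature contributions $(1-\beta)\ii R_h$ cancel. This produces the preliminary identity
\begin{align*}
	\ricet - \ete = -\ddb\bigl(\petd + \pet + k\rhoe\bigr) - \omhinf \son X,
\end{align*}
which holds on all of $X$ (not merely on $X\setminus D$), since the regularized data $\omet$, $\ete$, $\rhoe$, $\pet$ are smooth there.

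Next I would multiply this by $\oett$ and absorb the factor into the combination defining $\vet$. Writing $\oett\ddb(\petd + \pet + k\rhoe) = \ddb\vet + (\oett - 1)\ddb(\pet + k\rhoe)$ and then using $\omet = \omtilet + \ddb\pet = \omht + k\ddb\rhoe + \ddb\pet$ from \refs{ometdef} and \refs{omtilet} to substitute $\ddb(\pet + k\rhoe) = \omet - \omht$, I obtain
\begin{align*}
	\oett(\ricet - \ete) = -\ddb\vet + (1-\oett)\omet - \bigl[(1-\oett)\omht + \oett\omhinf\bigr].
\end{align*}
Setting $\ett = 1-\oett$ yields the stated $\ett\omet$ term, so it only remains to identify the bracketed fixed form with $\omhT$. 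The one genuinely computational point is this form identity $(1-\oett)\omht + \oett\omhinf = \omhT$: inserting $\omht = \emt\omz + (1-\emt)\omhinf$ together with the definition of $\omhinf$, the relation collapses to a scalar identity in $e^{-t}$ and $e^{-T}$, which is satisfied precisely for $\oett = 1 - e^{-(T-t)}$ and $\ett = e^{-(T-t)}$. This establishes (a).

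Finally, I would take the trace of (a) with respect to $\omet$, using $\mathrm{tr}_{\omet}\ricet = \ret$, $\mathrm{tr}_{\omet}\ete = \trete$, $\mathrm{tr}_{\omet}\ddb\vet = \lapet \vet$, $\mathrm{tr}_{\omet}(\ett\omet) = n\ett$, and $\mathrm{tr}_{\omet}\omhT = \tromhT$, which gives (b) at once. The main obstacle is really the bookkeeping in the middle step: one must use exactly the multiplicative factor $\oett$ built into $\vet$ so that, after substituting $\ddb(\pet + k\rhoe) = \omet - \omht$, the residual space-constant forms assemble into $\omhT$ rather than $\omhinf$. Everything else—the curvature cancellation in the first step and the tracing in the last—is routine once the correct $\oett$ and $\ett$ are in hand.
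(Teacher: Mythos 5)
Your proof is correct and follows essentially the same route as the paper: both rest on the preliminary identity $\ricet - \ete = -\ddb(\petd + \pet + k\rhoe) - \omhinf$ (the paper writes the last term as $\omht + \dt \omht$, which equals $\omhinf$ by (\refs{omht})) and then regroup with the weight $\oett$ so that the leftover constant forms assemble into $\omhT$, with (b) obtained by tracing against $\omet$ in both cases. The only cosmetic difference is that the paper performs the regrouping by adding the flow identity to $-e^{t-T}$ times itself, whereas you multiply by $\oett$, substitute $\ddb(\pet + k\rhoe) = \omet - \omht$, and verify the form identity $\ett\omht + \oett\omhinf = \omhT$ directly.
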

	
\begin{proof}
		(b) follows from (a) by taking traces. We prove (a). 
		By  (\refs{ometeq}), (\refs{omtilet}), and (\refs{ometdef}), we have
\begin{align*}
	\ricet - \ete
		&= - \dt \omet -\omet\\
		&= - \left(\dt \omht + \dt \ddb \pet \right) - \left(\omht + k \ddb \rhoe +\ddb \pet \right)\\
		&= -\ddb (\petd + \pet +k\rhoe) -\left(\omht + \dt \omht \right).
	\end{align*}	
On the other hand, we get
\begin{align*}
	-e^{t-T} (\ricet -\ete )
		&=-e^{t-T}\left(  - \dt \omet -\omet\right)\\
		&=e^{t-T}\left(\dt \omht + \dt \ddb \pet \right)+e^{t-T}\omet\\
		&= \ddb\left( e^{t-T} \petd \right)+e^{t-T}\omet + e^{t-T}\dt \omht.
\end{align*}
Combining these equalities and (\refs{omht}), we obtain (a).  
\end{proof}

By this proposition, to obtain the upper bound for the scalar curvature $R(\omet)$, we only need to estimate $\uet \deq \tromhT$ and $\lapet \vet$. 
We divide our argument into the following 5 steps:
\begin{itemize}
\setlength{\leftskip}{16pt}
		\item[\textbf{Step 1.}] The $C^0$-estimate for $\vet$ (Section \refs{vetsect}).
		\item[\textbf{Step 2.}] The $C^0$-estimate for $\uet \deq \tromhT$ using Step 1 and the parabolic Schwarz lemma (Section \refs{uetsect}).
		\item[\textbf{Step 3.}] The gradient estimate for $\vet$ (Section \refs{gvetsect}).
		\item[\textbf{Step 4.}] The Laplacian estimate for $\vet$ (Section \refs{lapetsect}):
			\begin{align*}
				\lapet \vet \ge - \dfrac{C}{T-t}.
			\end{align*}
		\item[\textbf{Step 5.}] Proof of Theorem \refs{thmA} (Section \refs{lapetsect}).  
\end{itemize}

\section{The \texorpdfstring{$C^0$}{C0}-estimate for \texorpdfstring{$v_{\varepsilon,t}$}{v}}\label{vetsect}
	
In this section, we prove the $C^0$-estimates for $\vet $. More precisely, we prove the following proposition.\const{v}
\begin{proposition}\label{bdvetprop}
		There exists a constant $\refconst{v}>0$ independent of $\varepsilon$ and $t$ such that
			\begin{align*}
				\| \vet \|_{C^0}\le \refconst{v}
			\end{align*}
		holds.
\end{proposition}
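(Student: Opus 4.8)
The plan is to derive a parabolic differential inequality for $v_{\varepsilon,t}$ and run the maximum principle, separating the upper and lower bounds, which turn out to be of very different difficulty. As a preliminary I would first record $\|\varphi_{\varepsilon,t}\|_{C^0}\le C$ uniformly in $\varepsilon,t$. Applying the maximum principle to \eqref{peteq}: at a spatial maximum (resp.\ minimum) of $\varphi_{\varepsilon,t}$ one has $\omega_{\varepsilon,t}\le\tilde\omega_{\varepsilon,t}$ (resp.\ $\ge$), so $\dot\varphi_{\varepsilon,t}+\varphi_{\varepsilon,t}+k\rho_\varepsilon=\log(\omega_{\varepsilon,t}^n/\Omega)+(1-\beta)\log(|s|_h^2+\varepsilon^2)$ is squeezed by the same expression for $\tilde\omega_{\varepsilon,t}$; the latter is bounded independently of $\varepsilon$ because $\tilde\omega_{\varepsilon,t}$ has exactly the cone volume behaviour along $D$, the two logarithmic singularities cancelling. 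This gives $\tfrac{d}{dt}\max_X\varphi_{\varepsilon,t}\le C-\max_X\varphi_{\varepsilon,t}$ and its reverse, hence the claim. Since $0\le\rho_\varepsilon\le C$ by \eqref{urho} and $e^{t-T}\in[e^{-T},1]$, bounding $v_{\varepsilon,t}$ is then equivalent to a two-sided bound on $\dot\varphi_{\varepsilon,t}$.

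Differentiating \eqref{peteq} in $t$ and using $\partial_t\omega_{\varepsilon,t}=\partial_t\hat\omega_t+\ddb\dot\varphi_{\varepsilon,t}$ together with $\ddb(\varphi_{\varepsilon,t}+k\rho_\varepsilon)=\omega_{\varepsilon,t}-\hat\omega_t$, the second-order terms $\Delta_{\omega_{\varepsilon,t}}\dot\varphi_{\varepsilon,t}$ cancel and one obtains $(\partial_t-\Delta_{\omega_{\varepsilon,t}})v_{\varepsilon,t}=\mathrm{tr}_{\omega_{\varepsilon,t}}(e^{t-T}\partial_t\hat\omega_t+\hat\omega_t)+\dot\varphi_{\varepsilon,t}-n$; the point of the weight $e^{t-T}$ is precisely that $(\partial_t-\Delta_{\omega_{\varepsilon,t}})(e^{t-T}\dot\varphi_{\varepsilon,t})=e^{t-T}\mathrm{tr}_{\omega_{\varepsilon,t}}\partial_t\hat\omega_t$ carries no zeroth-order $\dot\varphi_{\varepsilon,t}$-term. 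For the upper bound I would instead test the auxiliary function $\dot\varphi_{\varepsilon,t}-A\varphi_{\varepsilon,t}$, for which $(\partial_t-\Delta_{\omega_{\varepsilon,t}})(\dot\varphi_{\varepsilon,t}-A\varphi_{\varepsilon,t})=\mathrm{tr}_{\omega_{\varepsilon,t}}(\partial_t\hat\omega_t-A\hat\omega_t)-(1+A)\dot\varphi_{\varepsilon,t}+An$. Writing $\partial_t\hat\omega_t=-\tfrac{e^{-t}}{1-e^{-T}}(\omega_0-\hat\omega_T)$ and $\hat\omega_t=a_t\omega_0+(1-a_t)\hat\omega_T$ with $\hat\omega_T\ge0$, for $A$ large (on a fixed $[t_0,T]$) the form $\partial_t\hat\omega_t-A\hat\omega_t$ is $\le0$ and its trace drops; at a space-time maximum the left side is $\ge0$, giving $\dot\varphi_{\varepsilon,t}\le An/(1+A)$ there, and with the $\varphi$-bound $\dot\varphi_{\varepsilon,t}\le C$ everywhere (on $[0,t_0]$ the flow is uniformly smooth). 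Hence $v_{\varepsilon,t}\le C$.

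The lower bound is the real obstacle, since it is equivalent to a uniform lower bound $\omega_{\varepsilon,t}^n\ge c\,\Omega(|s|_h^2+\varepsilon^2)^{-(1-\beta)}$ on the volume form. The mirror choice $\dot\varphi_{\varepsilon,t}+A\varphi_{\varepsilon,t}$ fails: making $\partial_t\hat\omega_t+A\hat\omega_t\ge0$ forces $A$ to be at least of order $e^{-t}/(e^{-t}-e^{-T})$, which $\to\infty$ as $t\to T$, reflecting the genuine degeneration of $\omega_{\varepsilon,t}$ along the null locus of $[\omega_T]$, where $\hat\omega_T=f^\ast\omega_Z$ (and hence the relevant reference form) ceases to dominate $\omega_0$ and $\mathrm{tr}_{\omega_{\varepsilon,t}}(\cdot)$ loses its favourable sign; so a soft maximum principle alone cannot reach the estimate. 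To close it I would exploit the contraction hypothesis $[\omega_T]=[f^\ast\omega_Z]$: the semipositive pullback $\hat\omega_T$ enters $(\partial_t-\Delta_{\omega_{\varepsilon,t}})v_{\varepsilon,t}$ with positive coefficient, and the arithmetic-geometric mean inequality $\mathrm{tr}_{\omega_{\varepsilon,t}}\hat\omega_T\ge n(\hat\omega_T^n/\omega_{\varepsilon,t}^n)^{1/n}$ forces this heat operator to become large and positive exactly where the volume tries to collapse; combining this with a barrier pulled back from $Z$ that confines the degeneracy of $\hat\omega_T^n$ to the contracted locus should rule out collapse and yield the volume lower bound. Throughout, the delicate point is $\varepsilon$-independence of every constant, for which the uniform estimates \eqref{rh}, \eqref{omhT}, \eqref{urho} and the cone structure of the $\rho_\varepsilon$-regularization of \cite{CampanaGuenanciaPaun13ConeSingHolTensor,Shen14CKRF} are exactly what is needed.
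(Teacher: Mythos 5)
Your upper-bound argument is essentially sound and runs parallel to the paper's: a maximum principle on the Monge--Amp\`ere flow for $\varphi_{\varepsilon,t}$ using the volume comparison of Lemma~\ref{vollem}(b), followed by an auxiliary function to bound $\dot\varphi_{\varepsilon,t}$ from above (the paper uses $H_{\varepsilon,t}=(1-e^{t})\dot\varphi_{\varepsilon,t}+\varphi_{\varepsilon,t}+k\rho_\varepsilon+nt$ where you use $\dot\varphi_{\varepsilon,t}-A\varphi_{\varepsilon,t}$; your choice additionally requires a uniform lower bound on $\varphi_{\varepsilon,t}$, which the paper never needs, and whose proof has a $t$-uniformity issue you gloss over since $\tilde\omega_{\varepsilon,t}^n$ degenerates as $t\to T$). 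The genuine gap is in the lower bound. You assert that since $e^{t-T}\in[e^{-T},1]$, bounding $v_{\varepsilon,t}$ is equivalent to a two-sided bound on $\dot\varphi_{\varepsilon,t}$; but the weight in $v_{\varepsilon,t}=(1-e^{t-T})\dot\varphi_{\varepsilon,t}+\varphi_{\varepsilon,t}+k\rho_\varepsilon$ is $1-e^{t-T}$, which tends to $0$ as $t\to T$. Hence a lower bound on $v_{\varepsilon,t}$ does \emph{not} require the uniform volume lower bound $\omega_{\varepsilon,t}^n\ge c\,\Omega(|s|_h^2+\varepsilon^2)^{-(1-\beta)}$ that your third paragraph sets out to prove --- a bound which is in any case expected to fail near a finite-time singularity, where the limiting class is not K\"ahler and the volume may collapse on the contracted locus. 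Your proposed remedy (arithmetic--geometric mean plus an unspecified ``barrier pulled back from $Z$'') is aimed at this wrong, harder statement and is not carried out, so half of the proposition remains unproven.

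The point you are missing is precisely why the vanishing weight $1-e^{t-T}$ is built into $v_{\varepsilon,t}$: it makes the lower bound the \emph{easy} half. Your evolution equation for $v_{\varepsilon,t}$ is also off (it contains a spurious $\dot\varphi_{\varepsilon,t}$ term and puts the weight $e^{t-T}$, rather than $1-e^{t-T}$, on $\partial_t\hat\omega_t$); the correct identity is
\begin{align*}
\Bigl(\tfrac{\partial}{\partial t}-\Delta_{\omega_{\varepsilon,t}}\Bigr)v_{\varepsilon,t}
  = -n+\mathrm{tr}_{\omega_{\varepsilon,t}}\bigl(\hat\omega_t+(1-e^{t-T})\,\partial_t\hat\omega_t\bigr)
  = -n+\mathrm{tr}_{\omega_{\varepsilon,t}}(\hat\omega_T),
\end{align*}
since $\hat\omega_t+(1-e^{t-T})\partial_t\hat\omega_t=\hat\omega_T$ by the explicit form (\ref{omht}). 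Because $\hat\omega_T=f^\ast\omega_Z\ge 0$, the function $v_{\varepsilon,t}+nt$ is a supersolution of the heat equation, and the minimum principle gives $v_{\varepsilon,t}+nt\ge\min_{X\times\{0\}}v_{\varepsilon,0}=\min_X\bigl((1-e^{-T})\dot\varphi_{\varepsilon,0}+k\rho_\varepsilon\bigr)$, which is bounded below uniformly in $\varepsilon$ by Lemma~\ref{vollem}(a) and (\ref{urho}). That two-line argument is the entire lower bound; no barrier and no control of the degeneration of $\hat\omega_T^{\,n}$ is needed.
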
 
	
To apply the maximum principle, we need the following lemmas.
\begin{lemma}\label{vevlem}
$\vet$ satisfies the following evolution equation
	\begin{align*}
		\dalet \vet = -n + \uet ,
	\end{align*}
where $\uet \deq \tromhT$.
\end{lemma}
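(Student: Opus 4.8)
The plan is to read off the evolution equation by differentiating the definition $\vet = \oett\petd + \pet + k\rhoe$ directly, writing the heat operator as $\dalet = \dt - \lapet$ and handling the time-derivative and Laplacian parts separately. The only inputs are the potential flow \eqref{peteq}, which controls the derivatives of $\petd$, and the evolution of the reference forms recorded in \eqref{omht}.

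First I would compute the Laplacian part. Since $\omet = \omtilet + \ddb\pet = \omht + k\ddb\rhoe + \ddb\pet$ by \eqref{omtilet} and \eqref{ometdef}, we have $\ddb\pet + k\ddb\rhoe = \omet - \omht$, so that $\ddb\vet = \oett\,\ddb\petd + (\omet - \omht)$; taking the trace against $\omet$ gives $\lapet\vet = \oett\,\lapet\petd + n - \mathrm{tr}_{\omet}\omht$. For the time-derivative part I would differentiate \eqref{peteq} once more in $t$: using $\dt\log(\omet^n/\Omega) = \mathrm{tr}_{\omet}\,\dt\omet$ together with $\dt\omet = \dt\omht + \ddb\petd$, one obtains $\dt\petd = \mathrm{tr}_{\omet}\,\dt\omht + \lapet\petd - \petd$, whence $\dt\vet = (\dt\oett)\,\petd + \oett\,\dt\petd + \petd$.

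Subtracting the two, the $\lapet\petd$ contributions cancel automatically, and the terms linear in $\petd$ cancel because $\oett = 1 - e^{t-T}$ satisfies $\dt\oett - \oett + 1 = 0$; this is exactly why the coefficient $\oett$ is built into the definition of $\vet$. What remains is
\[
\dalet\vet = \oett\,\mathrm{tr}_{\omet}\,\dt\omht + \mathrm{tr}_{\omet}\omht - n = \mathrm{tr}_{\omet}\bigl(\omht + \oett\,\dt\omht\bigr) - n.
\]

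The crux is then the purely algebraic identity $\omht + \oett\,\dt\omht = \omhT$. Differentiating \eqref{omht} gives $\dt\omht = (\dt a_t)(\omz - \omhT)$, and the defining form of $\oett$ yields $a_t + \oett\,\dt a_t = 0$; hence the $\omz$-component drops out and the $\omhT$-component is normalized to $1$ (equivalently $(1-\oett)\omht + \oett\omhinf = \omhT$). Geometrically this converts the attracting form $\omhinf$ of the normalized flow into the degenerate representative $\omhT$ of the limiting class, and it produces $\dalet\vet = \mathrm{tr}_{\omet}\omhT - n = -n + \uet$. I expect the main obstacle to be exactly this bookkeeping: the three a priori uncontrolled contributions, $\lapet\petd$, the $\petd$-terms, and $\mathrm{tr}_{\omet}\,\dt\omht$, must collapse precisely to $-n + \uet$, and this hinges on the two identities $\dt\oett = \oett - 1$ and $a_t + \oett\,\dt a_t = 0$ satisfied by the coefficient $\oett$.
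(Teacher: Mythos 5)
Your proposal is correct and follows essentially the same route as the paper: differentiate the potential equation \eqref{peteq} in $t$, compute $\lapet\vet$ from $\omet-\omht=\ddb\pet+k\ddb\rhoe$, and observe that the coefficient $\oett$ is designed so that the $\petd$- and $\lapet\petd$-terms cancel and $\omht+\oett\,\dt\omht=\omhT$ (the paper reaches the same cancellation by splitting $\vet$ as $(\petd+\pet+k\rhoe)-\ett\petd$ and combining its equations (\refs{cc}), (\refs{dd}), (\refs{omht})). Your explicit identification of the two identities $\dt\oett-\oett+1=0$ and $a_t+\oett\,\dt a_t=0$ is exactly the algebra the paper leaves implicit in its final step.
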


\begin{proof}
Differentiating (\refs{peteq}) with respect to $t$, we have
\begin{align}\label{bb}
\notag 
	\dt \petd 
		&= \tret\left(\dt \bigl(\omtilet + \ddb \pet \bigr)\right) - \petd , \\ 
\mbox{i.e. }
	\dt \left( \petd + \pet \right) 
		&= \tret \left(\dt \omht \right)+\lapet \petd .
\end{align}
On the other hand, by (\refs{ometdef}) and (\refs{omtilet}), we have
\begin{align}\label{bc}
		\lapet \pet 
			= \tret (\omet - \omtilet )
			= n - \tret(\omht) - \lapet (k\rhoe).
\end{align}
Combing (\refs{bb}) and (\refs{bc}), we obtain
\begin{align}\label{cc}
	\dt \left( \petd + \pet + k \rhoe\right ) 
		&=\lapet ( \petd + \pet +k\rhoe )-n + \tret \left(\omht + \dt \omht \right).
\end{align}
		
Next, by using (\refs{bb}) again, we have
\begin{align}\label{dd}
	\dt \left( -\ett \petd  \right)
		&=-\ett \dt ( \petd + \pet )\\
	\notag 
		&=-\tret \left(\ett \dt \omht \right)-\lapet (\ett \petd ).
\end{align}
By (\refs{cc}), (\refs{dd}), and (\refs{omht}), we get the assertion.
\end{proof}
	
\begin{lemma}\label{vollem}
There exists a constant $\refconst{vol} >1$ independent of $\varepsilon$ ant $t$ satisfying the following inequalities: $\const{vol}$ 
\begin{enumerate}
	\item[(a)] {\abovedisplayskip=-12pt
				\begin{align*}
					\dfrac{1}{\refconst{vol}}\dfrac{\Omega}{\sobe }
						&\le \omtilez^n 
						\le \refconst{vol}\dfrac{\Omega}{\sobe }.
				\end{align*}
}
	\item[(b)] {\abovedisplayskip=-12pt
					\begin{align*}
						\omtilet^n  \le \refconst{z}^n \refconst{vol}\dfrac{\Omega}{\sobe }.
					\end{align*}
}
\end{enumerate}
\end{lemma}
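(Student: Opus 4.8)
The plan is to reduce both inequalities to the single observation that the singular part of $\ddb\rhoe$ has rank one. Setting $\alpha\deq\ii\langle\nabla s\wedge\nabla s\rangle_h$, formula (\refs{ddbrho}) reads $\ddb\rhoe=\frac{\beta^2}{\sobe}\alpha-\beta(\sbe-\varepsilon^{2\beta})\ii R_h$, in which the second term is a smooth $(1,1)$-form, uniformly bounded in $\varepsilon$ by (\refs{rh}) and (\refs{supsy}). I therefore put $\gamma\deq\omz-k\beta(\sbe-\varepsilon^{2\beta})\ii R_h$, so that $\omtilez=\gamma+\frac{k\beta^2}{\sobe}\alpha$; by (\refs{rh}), (\refs{supsy}), (\refs{omhT}) there are constants $c_1,c_2>0$, independent of $\varepsilon$, with $c_1\omz\le\gamma\le c_2\omz$ for $k$ small. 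Since $\omz^n$ and $\Omega$ are smooth positive volume forms on the compact manifold $X$, it follows that $\gamma^n$ is comparable to $\Omega$ uniformly in $\varepsilon$.

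To prove (a) I use that $\alpha$ is a globally defined smooth semipositive $(1,1)$-form of rank one, so that $\alpha\wedge\alpha=0$ and the binomial expansion collapses to
\begin{align*}
\omtilez^n=\gamma^n+\frac{nk\beta^2}{\sobe}\,\gamma^{n-1}\wedge\alpha .
\end{align*}
The form $\gamma^{n-1}\wedge\alpha$ is smooth and nonnegative, hence $0\le\gamma^{n-1}\wedge\alpha\le C\,\Omega$ on all of $X$. For the upper bound I note that $\sobe$ is bounded above (as $|s|_h\le\refconst{s}$ and $\varepsilon$ is small), so $\gamma^n\le C\,\Omega\le C'\frac{\Omega}{\sobe}$ while the second summand is at most $\frac{C}{\sobe}\Omega$; adding these gives $\omtilez^n\le\refconst{vol}\frac{\Omega}{\sobe}$. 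For the lower bound I split $X$ into a neighborhood $U$ of $D$ and its complement. On $U$ I claim $\gamma^{n-1}\wedge\alpha\ge c\,\Omega$, which forces $\omtilez^n\ge\frac{c'}{\sobe}\Omega$; on $X\setminus U$ one has $|s|_h\ge\delta>0$, so $\sobe$ is bounded below and $\omtilez^n\ge\gamma^n\ge c''\,\Omega\ge\frac{c'''}{\sobe}\Omega$. Together these give $\omtilez^n\ge\frac{1}{\refconst{vol}}\frac{\Omega}{\sobe}$.

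For (b) I avoid a second volume computation by comparing $\omtilet$ directly with $\omtilez$. The crucial point is that the singular term $k\ddb\rhoe$ is common to both, so it cancels in the difference: by (\refs{omht}),
\begin{align*}
\omtilet-\omtilez=\omht-\omz=(1-a_t)(\omhT-\omz).
\end{align*}
Since $a_t\in[0,1]$ and $\omhT\le\refconst{z}\omz$ by (\refs{omhT}), this gives $\omtilet\le\omtilez+(\refconst{z}-1)\omz$, and the positivity estimate (\refs{omtilzlower}) yields $\omz\le C\omtilez$; hence $\omtilet\le C\omtilez$ for a constant $C$ comparable to $\refconst{z}$ and independent of $t,\varepsilon$. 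Taking $n$-th powers and invoking (a) gives $\omtilet^n\le C^n\omtilez^n\le\refconst{z}^n\refconst{vol}\frac{\Omega}{\sobe}$ after enlarging $\refconst{vol}$.

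I expect the decisive step to be the claim $\gamma^{n-1}\wedge\alpha\ge c\,\Omega$ near $D$ in part (a). This is where the conical geometry enters: it requires $\alpha=\ii\langle\nabla s\wedge\nabla s\rangle_h$ to be nondegenerate in the conormal direction along $D$, which holds precisely because $\nabla s|_D=ds|_D\ne0$ for the smooth divisor $D$. Without this one would only obtain $\omtilez^n\ge c\,\Omega$, which is useless near $D$ where $\Omega/\sobe$ blows up; it is exactly the $\alpha$-term that makes the reference volume carry the full conical singularity $1/\sobe$. Establishing the bound uniformly in $\varepsilon$, via a continuity argument on the compact divisor $D$, is the technical core of the lemma.
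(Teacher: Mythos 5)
Your proof is correct and takes essentially the same route as the paper: part (a) is the volume comparison from \cite[Section 3]{CampanaGuenanciaPaun13ConeSingHolTensor}, which the paper simply cites via (\refs{ddbrho}) and you carry out in detail (the rank-one cancellation $\alpha\wedge\alpha=0$ and the nonvanishing of $\nabla s$ along the smooth divisor $D$ being exactly the right ingredients for the two-sided bound), while part (b) is reduced to (a) by the comparison $\omtilet\le C\,\omtilez$. If anything, your derivation of that comparison---cancelling the common singular term $k\ddb\rhoe$ and invoking (\refs{omtilzlower})---is cleaner than the paper's step $\refconst{z}\omz+k\ddb\rhoe\le\refconst{z}\omtilez$, which tacitly assumes $\ddb\rhoe\ge0$.
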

	
\begin{proof}
	The first inequality follows from (\refs{ddbrho}). We prove (b).
	For $0<k<\refconst{z}$, by (\refs{omtilet}) and (\refs{omhT}), we have	
\begin{align*}
	\omtileT
		&= \omhT + k \ddb \rhoe
		\le \refconst{z}\omz + k \ddb\rhoe
		\le \refconst{z}\omtilez .
\end{align*}
Since $\refconst{z}>1$, we have
\begin{align*}
	\omtilet
		&=a_t \omtilez +(1-a_t)\omtileT
		\le a_t \omtilez +\refconst{z}(1-a_t)\omtilez
		\le \refconst{z}\omtilez.
\end{align*}
Therefore we get the assertion.
\end{proof}

Using these lemmas, we can prove the uniform lower boundedness of $\vet$.
\begin{proposition}\label{vlprop}
$\vet$ is uniformly lower bounded. More precisely, there exists a constant $\const{vl}\refconst{vl}>0$ independent of $\varepsilon$ and $t$ such that
\begin{align*}
		\vet \ge -\refconst{vl}.
\end{align*}
\end{proposition}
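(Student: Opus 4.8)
The plan is to apply the parabolic minimum principle to the evolution equation for $\vet$ furnished by Lemma~\refs{vevlem}, namely $\dalet \vet = -n + \uet$, after noticing that the inhomogeneous term carries a favourable sign. Indeed $\uet = \tromhT$ is the trace, with respect to the \kahler \ metric $\omet$, of the semi-positive form $\omhT = f^\ast \omZ \ge 0$, so $\uet \ge 0$ and hence $\dalet \vet \ge -n$. As $\dalet = \dt - \lapet$, the whole estimate reduces to a uniform lower bound for the initial datum $\vet|_{t=0}$, the loss in time being harmless because $T < \infty$.

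For the initial value I would put $\pet|_{t=0}=0$ into (\refs{peteq}), which gives $\petd|_{t=0} = \log(\omtilez^n/\Omega) + (1-\beta)\log(\ste) - k\rhoe$. The key observation is that the $\varepsilon$-singular term $(1-\beta)\log(\ste)$ is precisely absorbed by the volume comparison of Lemma~\refs{vollem}(a): grouping it with $\log(\omtilez^n/\Omega)$ yields $\log(\omtilez^n \sobe/\Omega)$, which is pinned to the interval $[-\log\refconst{vol}, \log\refconst{vol}]$. Together with $0 \le \rhoe \le \refconst{rho}$ from (\refs{urho}) this produces $\petd|_{t=0} \ge -\log\refconst{vol} - k\refconst{rho}$, a bound independent of $\varepsilon$. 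Since $\oett|_{t=0} = 1 - \emT > 0$ and $k\rhoe \ge 0$, I conclude $\vet|_{t=0} \ge -C_0$ for some $C_0 > 0$ independent of $\varepsilon$.

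Finally, set $m(t) \deq \min_X \vet(\cdot,t)$, which is well defined and locally Lipschitz because $X$ is compact and, for each fixed $\varepsilon$, $\vet$ is smooth on all of $X$; this smoothness is exactly what the approximation $\omet$ buys us, so no difficulty occurs along $D$. At a spatial minimum point $\lapet \vet \ge 0$, whence Lemma~\refs{vevlem} and $\uet \ge 0$ give $\dt \vet \ge -n$ there, and Hamilton's trick upgrades this to $m'(t) \ge -n$ in the sense of forward difference quotients. Integrating, $m(t) \ge m(0) - nt \ge -C_0 - nT$, and using $t < T$ to bound $nt \le nT$ uniformly we obtain the claim with $\refconst{vl} \deq C_0 + nT$. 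I expect the only genuinely delicate point to be the uniformity in $\varepsilon$; this causes no real obstacle, since each constant entering the argument ($\refconst{vol}$, $\refconst{rho}$, and $T$) is independent of $\varepsilon$ and the $\varepsilon$-singular logarithm cancels exactly in the initial datum.
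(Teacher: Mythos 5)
Your proposal is correct and follows essentially the same route as the paper: both reduce the bound to the initial datum via the evolution equation $\dalet \vet = -n + \uet \ge -n$ and the sign $\uet \ge 0$, and both bound $\vet|_{t=0} = (1-e^{-T})\dot{\varphi}_{\varepsilon,0} + k\rhoe$ from below by absorbing the singular logarithm into Lemma~\refs{vollem}(a). The paper phrases the time-dependence by applying the minimum principle directly to $\vet + nt$, while you run Hamilton's trick on $\min_X \vet$; these are equivalent.
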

	
\begin{proof}
By Lemma \refs{vevlem} and the semi-positivity of $\omhT$, we have
\begin{align*}
	\dalet \left(\vet +nt\right)  
		=  \uet 
		= \tromhT \ge 0.
\end{align*}
Thus, the maximum principle for $\vet +nt $ gives the following:
\begin{align*}
	\vet +nt 
		\ge \min_{X\times \{0\}} (\vet +nt) 
		= (1-e^{-T})\dot{\varphi}_{\varepsilon,0}+ k \rhoe \ge (1-e^{-T})\dot{\varphi}_{\varepsilon,0}. 
\end{align*}
Lemma \refs{vollem} (a) and (\refs{urho}) give the lower boundedness of right hand side as follows:
\begin{align*}
	\dot{\varphi}_{\varepsilon,0}
		&= \log \dfrac{\omtilez^n}{\Omega/\sobe }-\varphi_{\varepsilon,0} -k\rhoe
		\ge -\log \refconst{vol} -k\refconst{rho}.
\end{align*}
Therefore we get the assertion. 
\end{proof}

The upper bound for $\vet$ follows from the next proposition.
\begin{proposition}\label{petprop}
We have the following inequalities: 
\begin{itemize}
	\item[(a)] $\pet \le \refconst{pet}$,
	\item[(b)] $\petd \le \refconst{petd}$,
\end{itemize}
where $\refconst{pet}>0$, $\refconst{petd}>0$ independent of $\varepsilon$ and $t$.\const{pet}
\const{petd}
\end{proposition}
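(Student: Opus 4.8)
The plan is to derive both estimates from the scalar maximum principle (Hamilton's trick) applied to the potential flow \refs{peteq}, together with the volume comparisons of Lemma \refs{vollem}. The guiding principle is that the only term in the relevant evolution equations that is not a priori controlled is the trace $\uet=\tret\omhT$ (precisely the quantity the main theorem allows to blow up), so each test function must be chosen so that $\uet$ either drops out or enters with the right sign.

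For (a) I would rewrite \refs{peteq}, using $(1-\beta)\log(\ste)=\log\sobe$, as
\begin{align*}
	\petd = \log\frac{\omet^n}{\Omega/\sobe} - \pet - k\rhoe .
\end{align*}
If $x_t$ realizes $\max_X\pet(\cdot,t)$, then $\ddb\pet\le 0$ there, so $\omet=\omtilet+\ddb\pet\le\omtilet$ and hence $\omet^n\le\omtilet^n$; Lemma \refs{vollem}(b) gives $\omet^n/(\Omega/\sobe)\le\refconst{z}^n\refconst{vol}$ at $x_t$. As $k\rhoe\ge 0$, Hamilton's trick yields $\frac{d}{dt}\max_X\pet\le\log(\refconst{z}^n\refconst{vol})-\max_X\pet$, and since $\max_X\pet|_{t=0}=0$ an ODE comparison gives $\pet\le\log(\refconst{z}^n\refconst{vol})=:\refconst{pet}$.

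For (b) the essential point is the choice of the weight $e^{t}-1$. Differentiating \refs{peteq} (see \refs{bb}) one has $\dalet\petd=\tret(\dt\omht)-\petd$, and \refs{bc} gives $\dalet(\pet+k\rhoe)=\petd-n+\tret\omht$. Hence $H\deq(e^{t}-1)\petd-\pet-k\rhoe$ satisfies, after the $\petd$-terms cancel,
\begin{align*}
	\dalet H = \tret\bigl((e^{t}-1)\dt\omht-\omht\bigr)+n .
\end{align*}
The decisive identity is $(e^{t}-1)\dt\omht-\omht=-\omz$, which follows from \refs{omht} and $\dt\omht=\tfrac{e^{-t}}{1-e^{-T}}(\omhT-\omz)$; crucially $\omhT$ (hence $\uet$) drops out entirely, leaving $\dalet H=n-\tret\omz\le n$. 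Since $H|_{t=0}=-k\rhoe\le 0$, the maximum principle gives $H\le nt$, that is
\begin{align*}
	(e^{t}-1)\,\petd \le nt+\pet+k\rhoe \le nT+\refconst{pet}+k\refconst{rho}.
\end{align*}

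This already proves $\petd\le\refconst{petd}$ on every interval $[t_0,T)$ with $t_0>0$. I expect the main obstacle to be the degeneracy of the weight $e^{t}-1$ at $t=0$: dividing through loses all uniformity as $t\to 0^{+}$, and one checks that replacing $e^{t}-1$ by any weight positive at $t=0$ reintroduces $\uet$ with an unfavourable sign, so no single parabolic quantity covers $[0,T)$ at once. I would therefore bound $\petd$ on a fixed short interval $[0,t_0]$ separately, starting from the uniform initial bound $\petd|_{t=0}=\log(\omtilez^n/(\Omega/\sobe))-k\rhoe\le\log\refconst{vol}$ supplied by Lemma \refs{vollem}(a) and \refs{urho}, and propagating it by a short-time maximum-principle argument in which the reference forms $\omht,\omtilet$ remain uniformly comparable to $\omz$ (as $a_t\to 1$). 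Patching this with the previous estimate yields a single constant $\refconst{petd}$ independent of $\varepsilon$ and $t$.
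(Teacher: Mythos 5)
Your proposal is correct and follows essentially the same route as the paper: part (a) is the maximum principle applied directly to $\pet$ via Lemma \refs{vollem}(b) (the paper evaluates at a space--time maximum rather than using Hamilton's trick, which is equivalent), and part (b) uses the same auxiliary quantity --- your $H$ is, up to sign and the additive term $nt$, the paper's $H_{\varepsilon,t}=(1-e^t)\petd+\pet+k\rhoe+nt$, whose heat-operator computation reduces to exactly the identity $(e^t-1)\dt\omht-\omht=-\omz$ you isolate. The paper likewise disposes of the degeneracy of the weight $e^t-1$ at $t=0$ with a separate short-time ("uniform local") parabolic estimate, just as you propose.
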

	
\begin{proof}
(a) Since $\varphi_{\varepsilon,0}=0$, $\pet $ takes maximum at $(x_0,t_0) \in X \times (0,T)$. 
By (\refs{peteq}) and Lemma \refs{vollem} (b), we have the following inequality at $(x_0,t_0) $:
\begin{align*}
	0 \le \dt \pet 
		&\le \log \dfrac{\omtilet^n}{\Omega/\sobe }- \pet -k\rhoe
		\le \log (\refconst{z}^n \refconst{vol}) -\pet .
\end{align*}
We obtain 
$\pet(x_0,t_0) \le \log (\refconst{z}^n \refconst{vol}) \ \ddeq \ \refconst{pet}.$
Since $(x_0,t_0) $ is arbitrary, $\pet \le \refconst{pet}$ holds on $X\times [0,T)$.
	
(b) We set $H_{\varepsilon,t}\deq (1-e^t) \petd + \pet + k\rhoe +nt$. The same computation in Lemma \refs{vevlem} gives
\begin{align*}
	\dalet H_{\varepsilon,t}= \tret(\omz )>0.
\end{align*}
By the maximum principle for $H_{\varepsilon,t}$, we have
\begin{align*}
	H_{\varepsilon,t} \ge \min_{X\times \{0\}} H_{\varepsilon,t}=k\rhoe \ge 0.
\end{align*}
Therefore, combining with (a) and (\refs{urho}), we get the upper bound for $\petd$:
\begin{align*}
	\petd \le \dfrac{\pet +k\rhoe +nt}{e^t-1}\le \dfrac{\refconst{pet} + k \refconst{rho} +nT}{e^t-1}.
\end{align*}
		Combining with the uniform local estimate for the parabolic equation, we get the assertion.
\end{proof}
	
\section{The \texorpdfstring{$C^0$}{C0}-estimate for \texorpdfstring{$u_{\varepsilon,t}$}{u}}\label{uetsect}
	
In this section, we prove the following proposition.
\begin{proposition}\label{uetprop}
	There exists a constant $\refconst{uet}>0$ independent of $\varepsilon$ and $t$ such that$\const{uet}$
\begin{align*}
	0 \le \uet \deq \tromhT \le \refconst{uet}.
\end{align*}
\end{proposition}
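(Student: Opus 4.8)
The lower bound $\uet \ge 0$ is immediate: $\omhT = f^\ast \omega_Z$ is semi-positive and $\omet$ is a genuine K\"ahler metric for each fixed $\varepsilon$, so the trace of a semi-positive form is nonnegative. The content is the uniform upper bound, which I would extract from the parabolic Schwarz (Chern--Lu) lemma applied to the holomorphic map $f \colon (X,\omet) \rightarrow (Z,\omega_Z)$, combined with the maximum principle and Step~1. Since $Z$ is compact and $\omega_Z$ is a fixed smooth K\"ahler form, its holomorphic bisectional curvature is bounded above by a constant $B$ independent of $\varepsilon$ and $t$. On the open set $\{\uet>0\}$ the Chern--Lu inequality bounds $\lapet \log \uet$ from below by $\langle \ricet, \omhT\rangle_{\omet}/\uet - B\uet$. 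Differentiating $\log \uet$ in time and inserting $\ricet = -\dt \omet - \omet + \ete$ from (\refs{ometeq}), the Ricci contributions cancel and I expect to reach the differential inequality
\begin{align*}
	\dalet \log \uet \le 1 + B\uet - \frac{\langle \ete, \omhT\rangle_{\omet}}{\uet} \qquad \text{on } \{\uet > 0\},
\end{align*}
where the constant $1$ is produced by the normalization term $-\omet$ in (\refs{ometeq}).

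Next I would show that the twist is harmless uniformly in $\varepsilon$. A standard Poincar\'e--Lelong computation (as in \cite[Section~3]{CampanaGuenanciaPaun13ConeSingHolTensor}) rewrites $\ete = (1-\beta)\bigl(P_\varepsilon + \tfrac{\varepsilon^2}{\ste}\ii R_h\bigr)$, where $P_\varepsilon \ge 0$ is a semi-positive $(1,1)$-form, its positivity coming from a Cauchy--Schwarz inequality between $\nabla s$ and $s$ in the line bundle $(\mathcal{O}_X(D),h)$. Since $P_\varepsilon$ and $\omhT$ are both semi-positive, $\langle P_\varepsilon, \omhT\rangle_{\omet} \ge 0$; and the curvature bound (\refs{rh}) together with $\varepsilon^2/\ste \le 1$ gives $\langle \ii R_h, \omhT\rangle_{\omet} \ge -\refconst{y}|\omhT|^2_{\omet} \ge -\refconst{y}\uet^2$, where I used $|\omhT|^2_{\omet} \le (\tromhT)^2 = \uet^2$. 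Hence $\langle \ete, \omhT\rangle_{\omet} \ge -(1-\beta)\refconst{y}\uet^2$, so that $-\langle \ete, \omhT\rangle_{\omet}/\uet \le (1-\beta)\refconst{y}\uet$ and the inequality improves to $\dalet \log \uet \le 1 + C\uet$ with $C \deq B + (1-\beta)\refconst{y}$, uniformly in $\varepsilon$ and $t$.

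Finally I would close the argument with the auxiliary quantity $Q \deq \log \uet - A\vet$ for a constant $A > C$. By Lemma~\refs{vevlem}, $\dalet \vet = -n + \uet$, so
\begin{align*}
	\dalet Q \le 1 + C\uet - A(-n + \uet) = 1 + An + (C-A)\uet.
\end{align*}
Taking $A = C+1$ makes the coefficient of $\uet$ equal to $-1$. Since $\log \uet \to -\infty$ wherever $\uet \to 0$, a maximum of $Q$ over $X \times [0,t]$ is attained either at $t=0$, where both $\uet$ (comparable to $\tret[\omtilez]\,\omhT$, bounded by (\refs{rh}) and the lower bound on $\omtilez$) and $\vet$ are uniformly bounded, or at an interior point with $\uet > 0$, where $\dalet Q \ge 0$ forces $\uet \le 1 + An$. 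In either case $Q$ is bounded above by a uniform constant $M$, whence $\log \uet = Q + A\vet \le M + A\refconst{v}$ by the $C^0$-bound of Step~1 (Proposition~\refs{bdvetprop}). This yields $\uet \le e^{M + A\refconst{v}} \ddeq \refconst{uet}$, independent of $\varepsilon$ and $t$.

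The \emph{main obstacle} is the first step: establishing the Schwarz-type inequality uniformly in $\varepsilon$ in the presence of the twist $\ete$, which degenerates along $D$ as $\varepsilon \to 0$. The decisive point is the sign decomposition $\ete = (1-\beta)\bigl(P_\varepsilon + \tfrac{\varepsilon^2}{\ste}\ii R_h\bigr)$ with $P_\varepsilon \ge 0$, which shows that the otherwise dangerous term $-\langle \ete, \omhT\rangle_{\omet}/\uet$ is in fact dominated by $C\uet$ and thus absorbed. I also emphasize that Step~1 is indispensable: $\log \uet$ alone does not satisfy a closed differential inequality, and it is precisely the term $-A\,\dalet \vet = An - A\uet$ that supplies the negative multiple of $\uet$ needed to run the maximum principle.
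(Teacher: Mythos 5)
Your proposal is correct and follows essentially the same route as the paper: the parabolic Schwarz lemma with the twist term absorbed via the sign decomposition of $\ete$ and the curvature bound (\refs{rh}) (this is exactly Lemma \refs{etelem} and Lemma \refs{paraboliclem}(c)), followed by the maximum principle applied to $\log \uet - (\refconst{para}+1)\vet$, which is the paper's $G_{\varepsilon,t}$. The only cosmetic difference is that you spell out the boundary case $t=0$ and the degenerate case $\uet\to 0$ slightly more explicitly than the paper does.
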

	
To prove this, we need the estimate $\ete$ and the parabolic Schwarz lemma. 
\begin{lemma}\label{etelem}
We have the following inequalities of $\ete$.
	\begin{enumerate}
		\item[(a)] Lower boundedness of $\ete$:
			\begin{align*}
				\ete 
					&= (1-\beta )\dfrac{\varepsilon^2}{\ste} 
							\left( 
									\dfrac{\ii \langle \nabla s \wedge \nabla s \rangle_h}{\ste} 
									+ \ii R_h 
							\right) \\
					&\ge -(1-\beta )\refconst{y}\omhT.
			\end{align*}
		\item[(b)] For any K\"ahler form $\omega$, we have 
			\begin{align*}
				-\langle \ete, \omhT \rangle_\omega 
						&\le (1-\beta) \refconst{y} |\omhT|_\omega^2
						\le (1-\beta )\refconst{y} (\mathrm{tr}_\omega (\omhT) )^2.
			\end{align*}
	\end{enumerate}
\end{lemma}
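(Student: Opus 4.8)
The plan is to treat the two parts separately: part~(a) is an explicit pointwise computation away from $D$, while part~(b) is a short algebraic consequence of the lower bound in~(a). For the identity in~(a) I would compute $\ddb\log\ste$ directly, by the same local calculation that produces (\refs{ddbrho}) in \cite{CampanaGuenanciaPaun13ConeSingHolTensor}. Since $s=f^\ast s_Y$ is holomorphic and $\mathcal{O}_X(D)$ has rank one, two pointwise identities are available: the holomorphicity identity $\ddb\st=\ii\langle\nabla s\wedge\nabla s\rangle_h-\st\,\ii R_h$, and the rank-one (Cauchy--Schwarz equality) identity $\ii\,\partial\st\wedge\bar\partial\st=\st\,\ii\langle\nabla s\wedge\nabla s\rangle_h$, which is what makes the gradient term collapse. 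Substituting these into $\ddb\log\ste=\ddb\st/\ste-\ii\,\partial\st\wedge\bar\partial\st/\ste^{2}$ and using $1-\st/\ste=\varepsilon^2/\ste$, the two curvature contributions recombine and, after adding $(1-\beta)\ii R_h$, give exactly $\ete=(1-\beta)\frac{\varepsilon^2}{\ste}\bigl(\frac{\ii\langle\nabla s\wedge\nabla s\rangle_h}{\ste}+\ii R_h\bigr)$.

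The lower bound in~(a) is then immediate. The form $\ii\langle\nabla s\wedge\nabla s\rangle_h$ is semi-positive, so by (\refs{rh}) the bracketed term is $\ge-\refconst{y}\omhT$; since $0\le\varepsilon^2/\ste\le 1$ and $\omhT\ge 0$, multiplying the form inequality by the positive scalar prefactor and then discarding the factor $\varepsilon^2/\ste\le 1$ only shrinks the negative part, yielding $\ete\ge-(1-\beta)\refconst{y}\omhT$.

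For part~(b), I would first rewrite the bound from~(a) as the semi-positivity $\ete+(1-\beta)\refconst{y}\omhT\ge 0$. The key fact is that the $\omega$-inner product of two semi-positive real $(1,1)$-forms is nonnegative: diagonalizing one of them in an $\omega$-orthonormal frame, this is $\mathrm{tr}(AB)\ge 0$ for positive semidefinite Hermitian $A,B$. Pairing the semi-positive form with $\omhT\ge 0$ then gives $\langle\ete,\omhT\rangle_\omega\ge-(1-\beta)\refconst{y}|\omhT|_\omega^2$, which is the first inequality. For the second, diagonalize $\omhT$ in an $\omega$-orthonormal frame: its eigenvalues $\lambda_i$ are nonnegative, and $|\omhT|_\omega^2=\sum_i\lambda_i^2\le(\sum_i\lambda_i)^2=(\mathrm{tr}_\omega\omhT)^2$.

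The only genuinely delicate point is the sign bookkeeping in the identity of~(a): one must keep the correct sign in $\ddb\log\ste=\ddb\st/\ste-\ii\,\partial\st\wedge\bar\partial\st/\ste^2$ and invoke the rank-one equality $\ii\,\partial\st\wedge\bar\partial\st=\st\,\ii\langle\nabla s\wedge\nabla s\rangle_h$, which fails in higher rank and is precisely what lets the gradient term cancel so that $\ete$ acquires the clean prefactor $\varepsilon^2/\ste$. Once this identity is in hand, both bounds are elementary.
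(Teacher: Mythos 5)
Your proof is correct and is essentially the argument the paper intends: the paper states Lemma \ref{etelem} without proof, implicitly deferring to the same Campana--Guenancia--P\u{a}un-style computation that produces (\refs{ddbrho}), and your derivation of the identity in (a) via $\ddb\st=\ii\langle\nabla s\wedge\nabla s\rangle_h-\st\,\ii R_h$ and the rank-one equality $\ii\,\partial\st\wedge\bar\partial\st=\st\,\ii\langle\nabla s\wedge\nabla s\rangle_h$ is exactly that computation, with the signs and the $\varepsilon^2/\ste$ prefactor coming out correctly. The deductions of the lower bound from (\refs{rh}) and of part (b) from the nonnegativity of $\langle\cdot,\cdot\rangle_\omega$ on pairs of semi-positive $(1,1)$-forms, together with $\sum_i\lambda_i^2\le(\sum_i\lambda_i)^2$ for $\lambda_i\ge 0$, are also correct.
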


By the fact that $\omhT$ is the pullback of  the \kahler \ form $\omega_Z$ by the holomorphic map $f$, 
we can use the parabolic Schwarz lemma which is obtained by Song-Tian~\cite{SongTian07KRFSurfKod}. 
This is the parabolic version of \cite{Yau78Schwarz}. 
Combining it with Lemma \refs{etelem} (b), we have the following Lemma.

\begin{lemma}[parabolic Schwarz lemma]\label{paraboliclem}
	$\uet$ and $\log \uet $ satisfy the following inequalities.\const{para}
\begin{enumerate}
		\item[(a)] {\abovedisplayskip=-12pt
			\begin{align*}
				\lapet \uet 
					&\ge -C_Z \uet^2 +\langle \ricet ,\omhT\rangle _{\omet }\\
					&\ge -\refconst{para}  \uet^2 +\langle \ricet -\ete ,\omhT\rangle _{\omet }.
			\end{align*}
		}		
		\item[(b)] {\abovedisplayskip=-12pt
			\begin{align*}
			\dalet \uet 
				&\le \uet + C_Z \uet^2 - \langle \ete, \omhT\rangle_{\omet} - \dfrac{\nuets}{\uet }\\
				&\le \uet +\refconst{para} \uet^2 - \dfrac{\nuets}{\uet }.
			\end{align*}
		}
		\item[(c)] {\abovedisplayskip=-12pt
		\begin{align*}
			\dalet \log \uet 
				&\le C_Z \uet +1 - \dfrac{\langle \ete, \omhT\rangle_{\omet} }{\uet}\\
				&\le \refconst{para} \uet+1.
		\end{align*}
}
\end{enumerate}
	Here, $\nabla $ is $(1,0)$-part of the Levi-Civita connection of $\omet$, 
	$C_Z>0$ is an upper bound for the bisectional curvature of $\omega_Z$, and $\refconst{para}\deq C_Z + (1-\beta)\refconst{y}>0$.  
\end{lemma}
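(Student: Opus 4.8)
The statement is the parabolic Schwarz lemma adapted to the twisted normalized flow \refs{ometeq}. The plan is to run the classical Chern--Lu/Yau computation for the pullback metric $\omhT = f^\ast \omega_Z$, combine it with the time-derivative of $\uet$ dictated by the flow, and finally absorb the twist $\ete$ using Lemma \refs{etelem} (b). All three inequalities (a)--(c) share the same two ingredients: an elliptic lower bound for $\lapet \uet$ (equivalently for $\lapet \log \uet$), and the evolution of $\uet$ in $t$.

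First I would compute the time derivative. Since $\omhT$ is independent of $t$, differentiating $\uet = \tromhT$ and substituting the flow equation \refs{ometeq} for $\partial_t \omet = -\ricet - \omet + \ete$ gives
\begin{align*}
	\partial_t \uet = \langle \ricet, \omhT\rangle_{\omet} + \uet - \langle \ete, \omhT\rangle_{\omet}.
\end{align*}
The middle term $+\uet$ is precisely the contribution of the normalization term $-\omet$, and the last term is the twist.

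Next, because $\omhT = f^\ast \omega_Z$ is the pullback of the K\"ahler form $\omega_Z$ under the holomorphic map $f$, the Chern--Lu inequality (Yau's Schwarz lemma) applies at points where $\uet>0$ and yields
\begin{align*}
	\lapet \log \uet \ge \frac{\langle \ricet, \omhT\rangle_{\omet}}{\uet} - C_Z \uet,
\end{align*}
where $C_Z$ bounds the holomorphic bisectional curvature of $\omega_Z$ from above; this is the only place where the holomorphicity of $f$ and the smoothness of $\omega_Z$ enter. Dividing the evolution of $\uet$ by $\uet$ and subtracting the displayed inequality makes the Ricci terms cancel and produces (c):
\begin{align*}
	\dalet \log \uet \le C_Z \uet + 1 - \frac{\langle \ete, \omhT\rangle_{\omet}}{\uet}.
\end{align*}
To pass to (a) I would rewrite $\lapet \log \uet = \lapet \uet / \uet - \nuets/\uet^2$, clear denominators, and drop the nonnegative gradient term; to pass to (b) I would combine the resulting lower bound for $\lapet \uet$ with the formula for $\partial_t \uet$, retaining the $-\nuets/\uet$ term. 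In each case the second (simplified) inequality follows by invoking Lemma \refs{etelem} (b), namely $-\langle \ete, \omhT\rangle_{\omet} \le (1-\beta)\refconst{y}\,\uet^2$, which converts the twist contribution into a multiple of $\uet$ or $\uet^2$ and is absorbed into the redefinition $\refconst{para} = C_Z + (1-\beta)\refconst{y}$.

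The main obstacle is the Chern--Lu computation itself rather than the bookkeeping: one must carry out the Bochner-type calculation for $\uet = \tromhT$ carefully, tracking that the target-curvature term contributes exactly the $-C_Z\uet$ and that the inequality is only meaningful where $\uet>0$ (it degenerates along the fibers of $f$). Since the underlying elliptic Schwarz estimate and its parabolic packaging are due to Yau and Song--Tian respectively, the genuinely new point here is merely verifying that the extra normalization term $-\omet$ and the twist $\ete$ enter as the clean terms $+1$ and $-\langle \ete, \omhT\rangle_{\omet}/\uet$, after which Lemma \refs{etelem} (b) closes the estimate.
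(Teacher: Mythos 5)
Your proposal is correct and follows the same route as the paper, which simply invokes the Song--Tian parabolic Schwarz lemma (the parabolic packaging of Yau's Chern--Lu computation applied to $\omhT = f^\ast\omega_Z$) and then absorbs the twist term via Lemma \refs{etelem} (b) into the constant $\refconst{para}=C_Z+(1-\beta)\refconst{y}$. Your computation of $\partial_t\uet$ from the flow equation, the cancellation of the Ricci terms, and the passage between the $\log\uet$ and $\uet$ forms of the estimate all check out and correctly reconstruct the details the paper leaves to the citation.
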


\noindent 
\textit{Proof of Proposition \refs{uetprop}}.
We set \const{parap}$G_{\varepsilon,t} \deq \log \uet -\refconst{parap}\vet$, where $\refconst{parap}\deq \refconst{para}+1>0$.  
The uniform upper boundedness of $G_{\varepsilon,0}$ follows from (\refs{omhT}), (\refs{omtilzlower}) and Proposition \refs{bdvetprop}. 
We assume that $G_{\varepsilon,t} $ achieves maximum at $(x_0,t_0) \in X \times (0,T)$.
At this point, by Lemma \refs{paraboliclem} (c) and Lemma \refs{vevlem}, we have $\uet >0$ and
\begin{align*}
	\dalet G_{\varepsilon,t} 
		&\le (\refconst{para} \uet+1) - \refconst{parap} (\uet -n) 
		= -\uet +(\refconst{parap}n+1) .
\end{align*}
By using the uniform boundedness of $\vet$ (Proposition \refs{bdvetprop}), we obtain \const{gub}
\begin{align*}
	G_{\varepsilon,t}(x_0,t_0) 
		\le \log (\refconst{parap}n+1) -\refconst{parap}\vet 
		\le \log (\refconst{parap}n+1) +\refconst{parap}\refconst{v} .
\end{align*}
Since $(x_0,t_0)$ is arbitrary, we have $G_{\varepsilon,t}\le \refconst{gub}$ on $X\times [0,T)$. 
Hence, by using Proposition \refs{bdvetprop} again, we get the assertion.

\section{The Gradient estimate for \texorpdfstring{$v_{\varepsilon,t}$}{v}}\label{gvetsect}

In this section, we prove the following gradient estimate for $\vet$.

\begin{proposition}\label{nvetsprop}
	There exists a uniform constant $\refconst{nvets}>0$ which is independent of $\varepsilon$ and $t$ such that \const{nvets}
\begin{align*}
	\nvets \le \refconst{nvets}.
\end{align*}
\end{proposition}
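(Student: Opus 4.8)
The plan is to bound $\nvets$ by the maximum principle applied to a carefully chosen auxiliary quantity, using the $C^0$-bounds already obtained for $\vet$ (Proposition \ref{bdvetprop}) and for $\uet$ (Proposition \ref{uetprop}) together with the parabolic Schwarz lemma (Lemma \ref{paraboliclem}). First I would compute the evolution of $\nvets$ under the twisted normalized flow \eqref{ometeq}. Combining the Bochner formula with the flow equation $\dalet\vet=-n+\uet$ (Lemma \ref{vevlem}), the Ricci contributions coming from $-\lapet$ and from $\partial_t\omet^{-1}$ cancel, and one is left schematically with an inequality of the form
\[
\dalet\nvets \le \nvets - \ete(\nabla\vet,\nabla\vet) + 2\,\mathrm{Re}\langle\nabla\uet,\nabla\vet\rangle_{\omet} - |\nabla\nabla\vet|^2 - |\nabla\bar\nabla\vet|^2 .
\]
Here the leftover $+\nvets$ is produced by the normalization term $-\omet$ in the flow, while the twist term is controlled from above by Lemma \ref{etelem}(a): since $\ete\ge -(1-\beta)\refconst{y}\omhT$, one gets $-\ete(\nabla\vet,\nabla\vet)\le (1-\beta)\refconst{y}\,\uet\,\nvets\le C\nvets$ using $0\le\uet\le\refconst{uet}$.

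The genuinely dangerous term is the cross term $2\,\mathrm{Re}\langle\nabla\uet,\nabla\vet\rangle$, because $\nabla\uet$ is not controlled on its own. The only good term involving $|\nabla\uet|^2$ is the $-\nuets/\uet$ appearing in Lemma \ref{paraboliclem}(b), so I must couple $\nvets$ to $\uet$. Accordingly I would run the maximum principle on
\[
Q \deq \frac{\nvets}{A-\vet} + B\,\uet ,
\]
where $A\deq 2\refconst{v}+1$ so that $1\le A-\vet\le 4\refconst{v}+1$, and $B>0$ is a large constant to be fixed. Writing $P\deq \nvets/(A-\vet)$ and using $\dalet(A-\vet)=n-\uet$, Young's inequality $2\,\mathrm{Re}\langle\nabla\uet,\nabla\vet\rangle\le \nuets/\uet + \uet\,\nvets$ converts the cross term into $\nuets/\uet$ (plus a term absorbed into $C\nvets$), and this is then cancelled by the estimate $B\dalet\uet\le B(\uet+\refconst{para}\uet^2)-B\nuets/\uet$ once $B$ is large. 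At a maximum of $Q$, where $\nabla P=-B\nabla\uet$, the quotient structure together with the Kato-type inequality $|\nabla\nvets|^2\le 2(|\nabla\nabla\vet|^2+|\nabla\bar\nabla\vet|^2)\,\nvets$ turns the Hessian term into a negative multiple of $P^2$. Altogether, at an interior space-time maximum $(x_0,t_0)$ of $Q$ one obtains $0\le \dalet Q\le C P - cP^2 + C$ with $c,C>0$ independent of $\varepsilon$ and $t$, which forces $P(x_0,t_0)\le C'$ and hence a uniform bound for $Q$ and for $\nvets$.

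The main obstacle is the simultaneous treatment of two competing requirements. On one hand, because $\omet$ degenerates along $D$, the initial gradient $|\nabla\vet|^2_{\omega_{\varepsilon,0}}$ is \emph{not} bounded independently of $\varepsilon$, so a Gr\"onwall argument on the finite interval $\zt$ is not enough; we genuinely need the self-improving quadratic $-cP^2$ produced by the quotient $\nvets/(A-\vet)$ in order to forget the initial data, and it is essential here that $T<\infty$ plays no role in this part of the argument. On the other hand, the cross term $2\,\mathrm{Re}\langle\nabla\uet,\nabla\vet\rangle$ must be absorbed \emph{exactly} by the single good term $-\nuets/\uet$ from the parabolic Schwarz lemma, and adding $B\uet$ perturbs the maximum condition to $\nabla P=-B\nabla\uet$, so the Kato bound for the Hessian now carries an extra $+|\nabla\uet|^2$ contribution. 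The delicate point is to verify that, at a maximum where $P$ is large, this extra contribution is dominated by $B\,\nuets/\uet$ (which holds once $P\gtrsim B\uet$), so that the negative quadratic survives and the whole argument closes with all constants independent of $\varepsilon$ and $t$.
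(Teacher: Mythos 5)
Your strategy coincides with the paper's own: the paper likewise applies the maximum principle to $\Psiet+\uet$ with $\Psiet=\nvets/(A-\vet)$, bounds the twist term by $(1-\beta)\refconst{y}\refconst{uet}\nvets$ via Lemma \ref{etelem}(a) and Proposition \ref{uetprop}, splits the cross term $2\re\langle \nvet,\nuet\rangle_{\omet}$ by Young's inequality and absorbs the resulting multiple of $\nuets$ into the good term $-\nuets/\uet$ of the parabolic Schwarz lemma, and extracts a negative multiple of $\nvetf/(A-\vet)^3$ from the quotient structure via a Cauchy--Schwarz (Kato-type) inequality on $\langle\nabla\nvets,\nvet\rangle_{\omet}$. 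All of this is correct and is essentially identical to the paper's computation (the paper uses a small parameter $\delta$ on the Schwarz-lemma side rather than a large coefficient $B$ in front of $\uet$; the two normalizations are equivalent).

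The one genuine problem is your treatment of the initial slice. You assert that $|\nabla\vet|^2_{\omega_{\varepsilon,0}}$ is \emph{not} bounded independently of $\varepsilon$ and that the self-improving quadratic term lets the argument ``forget the initial data.'' A bare maximum principle on $X\times[0,T)$ cannot forget initial data: if the maximum of your quantity $Q$ is attained at $t=0$, the interior inequality $0\le CP-cP^2+C$ is unavailable and you only get $Q\le\max_{X\times\{0\}}Q$, which by your own premise would not be uniform in $\varepsilon$. To genuinely discard the initial slice you would need a time weight vanishing at $t=0$ (e.g.\ running the argument on $tQ$), which you do not introduce. The correct resolution --- and the one the paper uses --- is that your premise is false: $\Psi_{\varepsilon,0}+u_{\varepsilon,0}$ \emph{is} uniformly bounded in $\varepsilon$, by the estimates of \cite[Section 4]{CampanaGuenanciaPaun13ConeSingHolTensor} together with Propositions \ref{bdvetprop} and \ref{uetprop}. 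With that input your argument closes; as written, it does not.
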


To prove this proposition, as in \cite{Zhang10ScalFTSingKRF}, we set $\Psiet \deq \dfrac{\nvets}{A-\vet}$ 
and use the maximum principle to $\Psiet +\uet$. 
Here $A>\refconst{v}+1$ is a fixed constant. 
\begin{lemma}\label{psietlem}
	We have the following formulas.
	\begin{enumerate}
		\item[(a)]  {\abovedisplayskip=-12pt
		\begin{align*}
			&\dalet \nvets \\
			&= \nvets  - \ete ( \nvet , \nbvet )
				+ 2 \re\langle \nvet, \nuet \rangle_{\omet}
			 \\&\quad 
			 - \nnvets - \nnbvets  .
		\end{align*}
	}
		\item[(b)] {\abovedisplayskip=-12pt
		\begin{align*}
			&\dalet \lapet \vet \\
			&= \lapet \vet + \lapet \uet
				 + \langle \ricet - \ete , \ddb\vet\rangle_{\omet}.
		\end{align*}
	}
		\item[(c)] {\abovedisplayskip=-12pt
		\begin{align*}
		&\left(\dt - \lapet  \right) \Psiet \\
		&= \dfrac{1}{A-\vet}\biggl( \nvets  - \nnvets - \nnbvets- \ete ( \nvet , \nbvet ) \\
			&\quad  + 2\re\langle \nvet, \nuet \rangle_{\omet}\biggr)\\
			&\quad + \dfrac{1}{(A-\vet)^2}\left((\uet - n)\nvets
						-2\re \langle \nabla \nvets , \nvet \rangle_{\omet}\right)\\
			&\quad - \dfrac{2}{(A-\vet)^3}\nvetf .
		\end{align*}
	}
	\end{enumerate}
\end{lemma}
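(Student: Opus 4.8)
The goal is to establish three parabolic identities for the quantities built from the gradient and Laplacian of $\vet$, all expressed with respect to the evolving metric $\omet$. These are the computational backbone for the Bochner-type argument that will later yield the gradient estimate in Proposition~\refs{nvetsprop}. Since $\vet$ satisfies the evolution equation $\dalet \vet = -n + \uet$ from Lemma~\refs{vevlem}, the plan is to differentiate and commute derivatives using the standard Kähler geometry identities, keeping careful track of the curvature terms that arise along the twisted Ricci flow.

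For part (a), I would begin from $\nvets = \langle \nvet, \nvet\rangle_{\omet} = \omet^{i\bar j}\partial_i \vet \, \partial_{\bar j}\vet$ and apply the heat operator $\dalet = \dt - \lapet$. Differentiating in $t$ produces a term from $\dt \omet^{i\bar j}$, which by the flow equation~\refs{ometeq} reintroduces $\ricet$ and the twisting term $\ete$; together with the spatial Laplacian acting on the product, the Bochner formula generates the full Hessian contributions $\nnvets$ and $\nnbvets$ (the holomorphic and mixed second covariant derivatives). The key cancellation is that the Ricci curvature coming from commuting covariant derivatives exactly matches the Ricci term from the metric's time derivative, leaving behind $-\ete(\nvet, \nbvet)$ as the net curvature contribution and the cross term $2\re\langle \nvet, \nuet\rangle_{\omet}$ coming from differentiating the right-hand side $-n+\uet$ of the $\vet$-equation. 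Part (b) is the trace analogue: applying $\dalet$ to $\lapet\vet = \tret(\ddb\vet)$ and using the same commutation, the curvature pairing collapses to $\langle \ricet - \ete, \ddb\vet\rangle_{\omet}$, and differentiating the source term again yields $\lapet\uet$.

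Part (c) is then essentially an exercise in the quotient and chain rules applied to $\Psiet = \nvets/(A-\vet)$, combining the identity from (a) with the evolution $\dalet\vet = -n+\uet$. Writing $\Psiet = N/D$ with $N = \nvets$ and $D = A-\vet$, I would compute $(\dt-\lapet)(N/D)$ by expanding
\begin{align*}
	\left(\dt - \lapet\right)\frac{N}{D}
		= \frac{1}{D}\left(\dt - \lapet\right)N
			- \frac{N}{D^2}\left(\dt - \lapet\right)D
			- \frac{2}{D^3}\,\nabla D \cdot \overline{\nabla D}\,(\text{with the sign from } \lapet),
\end{align*}
and then substitute $\left(\dt-\lapet\right)D = -\left(\dt-\lapet\right)\vet = n-\uet$ together with $\nabla D = -\nvet$, so that the gradient-of-$\vet$ terms reorganize into the displayed $(A-\vet)^{-2}$ and $(A-\vet)^{-3}$ contributions. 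I expect the main obstacle to be bookkeeping in parts (a) and (b): ensuring that the commutator terms from interchanging $\partial_t$ with covariant differentiation produce precisely the Ricci contraction that cancels against the flow, and correctly identifying the sign and placement of the twisting form $\ete$ throughout. Once those two identities are pinned down, part (c) follows by direct algebraic manipulation with no further geometric input.
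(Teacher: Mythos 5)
Your outline is correct and coincides with the standard derivation that the paper itself omits (Lemma \refs{psietlem} is stated without proof; these are the identities going back to Zhang and Song--Tian): in (a) the Ricci term produced by the time derivative of the inverse metric does cancel the Ricci term from the Bochner formula, leaving $-\ete(\nvet,\nbvet)$; in (b) it survives as $\langle \ricet-\ete,\ddb\vet\rangle_{\omet}$; and (c) is quotient-rule algebra using $\dalet(A-\vet)=n-\uet$. One bookkeeping correction: your displayed quotient-rule identity drops the cross term $-\tfrac{2}{D^2}\re\langle\nabla N,\nabla D\rangle_{\omet}$ arising from $\lapet$ applied to the product $N\cdot D^{-1}$, and this is exactly the term that produces $-\tfrac{2}{(A-\vet)^2}\re\langle\nabla\nvets,\nvet\rangle_{\omet}$ in (c), so it must be restored when the computation is written out in full.
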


\noindent
\textit{Proof of Proposition \refs{nvetsprop}}. 
We will apply the maximum principle to $\Psiet +\uet$. First, we estimate $\dalet \Psiet$. 
By Lemma \refs{etelem} (a) and Proposition \refs{uetprop}, we have
\begin{align}\label{AA}
	- \ete ( \nvet , \nbvet )	
		&\le (1-\beta)\refconst{y} \omhT  ( \nvet , \nbvet )	\\
\notag 
		&\le  (1-\beta)\refconst{y} \tromhT  \ \omet ( \nvet , \nbvet )	\\
\notag 
		&\le (1-\beta)\refconst{y} \refconst{uet} \nvets.
\end{align}

For sufficiently small constant $\delta >0$ which will be determined later, we have 
\begin{align}\label{BB}
	2\re \langle \nvet, \nuet \rangle_{\omet}
		&\le 2 |\nabla\vet |_{\omet} |\nabla \uet |_{\omet}
		\le \dfrac{1}{\delta}\nvets + \delta \nuets.
\end{align}

Since  
\begin{align}\label{npsiet}
	\nabla \Psiet = \dfrac{\nabla \nvets}{A-\vet}+ \dfrac{\nvets}{(A-\vet)^2}\nvet ,
\end{align}
we have
\begin{align}\label{CC}
	& -\dfrac{2-\delta }{(A-\vet)^2}\re \langle \nabla \nvets , \nvet \rangle_{\omet}\\
\notag 
	&=-\dfrac{2-\delta}{A-\vet }\re \langle \nabla \Psiet , \nvet \rangle_{\omet}+(2-\delta )\dfrac{\nvetf}{(A-\vet )^3}.
\end{align}

On the other hand, the Cauchy-Schwarz inequality gives
\begin{align*}
	&|\langle \nabla \nvets , \nvet \rangle_{\omet}|\\
	&=\left|\guijb \guklb \Bigl((\delk \deli \vet )( \deljb \vet )( \dellb \vet ) + ( \deli \vet )( \delk \deljb \vet )( \dellb \vet) \Bigr) \right|\\
	&\le \nvets (\nnvet +\nnbvet )\\
	&\le \sqrt{2}\nvets (\nnvets +\nnbvets)^{1/2}.
\end{align*}
By using this inequality, we obtain the following:
\begin{align}\label{DD}
	&\dfrac{-\delta }{(A-\vet)^2}\re \langle \nabla \nvets , \nvet \rangle_{\omet}\\
\notag 
		&\le \dfrac{\delta }{(A-\vet)^2}\left(\sqrt{2} \nvets (\nnvets + \nnbvets)^{1/2} \right)\\
\notag 
		&=\sqrt{2} \delta \dfrac{\nvets}{(A-\vet)^{3/2}} \dfrac{ (\nnvets + \nnbvets)^{1/2}}{(A-\vet)^{1/2}}\\
\notag 
		&\le \dfrac{\delta}{2}\dfrac{\nvetf}{(A-\vet)^{3}}+ \delta \dfrac{ \nnvets + \nnbvets}{A-\vet}.
\end{align}

Therefore, combining Proposition \refs{psietlem} (c) with (\refs{AA}),(\refs{BB}), (\refs{CC}), \refs{DD}, Proposition \refs{uetprop}, Proposition \refs{bdvetprop}, and $A- \refconst{v}>1$, we obtain the following inequality:\const{Psiet}
\begin{align*}
		&\dalet \Psiet\\
		&\le \refconst{Psiet} \nvets 
			+\delta \nuets -\dfrac{2-\delta}{A-\vet}\re \langle \nabla \Psiet , \nvet \rangle_{\omet}
			 -\dfrac{\delta}{2}\dfrac{\nvetf}{(A+\refconst{v})^3},
\end{align*}
where $\refconst{Psiet} \deq 1+(1-\beta) \refconst{y}\refconst{uet}+(1/\delta) +\refconst{uet}>0$.\const{uett}

On the other hand, by Lemma \refs{paraboliclem} (a) and Proposition \refs{uetprop}, we have
\begin{align*}
	\dalet \uet 
		&\le  \uet +\refconst{para} \uet^2 - \dfrac{\nuets}{\uet }
		\le \refconst{uett}-2\delta \nuets,
\end{align*}
where $\refconst{uett}\deq \refconst{uet} +\refconst{para} \refconst{uet}^2$ and $0<\delta< 1/(2\refconst{uet})$. 

Finally, we obtain the following inequality:
\begin{align}\label{EE}
	&\dalet (\Psiet+\uet)\\ 
\notag 
	&=\refconst{uett}+\refconst{Psiet} \nvets
		 - \delta \nuets 
	\\\notag &\quad 
		 - \dfrac{2-\delta}{A-\vet}\re \langle \nabla \Psiet , \nvet \rangle_{\omet} 
		 -\dfrac{\delta}{2}\dfrac{\nvetf}{(A+\refconst{v})^3}\\
\notag 
	&\le \refconst{uett}+\left( \refconst{Psiet}+\dfrac{1}{\delta} \right) \nvets\\
\notag 
	&\quad 
		 -\dfrac{2-\delta}{A-\vet} \re \langle \nabla (\Psiet +\uet) , \nvet \rangle_{\omet}
		-\dfrac{\delta}{2}\dfrac{\nvetf}{(A+\refconst{v})^3}.
\end{align}
Here, we used the following inequality:
\begin{align*}
	\dfrac{2-\delta}{A-\vet}\re \langle \nuet , \nvet \rangle_{\omet}
		&\le 2 |\nvet |_{\omet} |\nuet |_{\omet}\\
		&\le \dfrac{1}{\delta}\nvets  +\delta \nuets.
\end{align*}

The uniform boundedness of $\Psi_{\varepsilon,0}+ u_{\varepsilon,0}$ follows from \cite[Section 4]{CampanaGuenanciaPaun13ConeSingHolTensor}, Proposition \refs{bdvetprop} and Proposition \refs{uetprop}. If $\Psiet +\uet $ achieves maximum at $(x_0,t_0) \in X\times (0,T)$, by (\refs{EE}), we have the following estimate:
\begin{align*}
0
&\le \refconst{uett}+\left( \refconst{Psiet}+\dfrac{1}{\delta} \right) \nvets-\dfrac{\delta}{2}\dfrac{1}{(A+\refconst{v})^3}\nvetf \sat (x_0,t_0).
\end{align*}
It follows that there exists a constant $\refconst{nvetsatpt}>0$ satisfying\const{nvetsatpt}
\begin{align*}
\nvets \le \refconst{nvetsatpt} \sat (x_0,t_0),
\end{align*}
which does not depend on $\varepsilon$ and $t$. By using the definition of $\Psiet$, $A-\vet>1$, and Proposition \refs{uetprop}, we have the uniform upper bound for $\Psiet +\uet$ on $X \times \zt$. 
Therefore, we obtain the uniform upper bound for $\nvets$.

\section{The Laplacian estimate for \texorpdfstring{$v_{\varepsilon,t}$}{v}}\label{lapetsect}
	
In this section, we estimate $\lapet \vet $. 
In order to prove the uniform upper boundedness of $\lapet \vet$, 
we need the lower bound for the scalar curvature due to Edwards  \cite[Corollary 4.3]{Edwards15ScalBoundCKRF}.
\begin{proposition}[{\cite[Corollary 4.3]{Edwards15ScalBoundCKRF}}]\label{retlower}
The scalar curvature $R(\omet)$
 is uniformly bounded from below by\const{lower}
	\begin{align*}
		R(\omet) -\trete \ge -\refconst{lower},
	\end{align*}
	where $\refconst{lower}>0$ is a constant independent of  $\varepsilon$ and $t$.
\end{proposition}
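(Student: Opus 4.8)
The plan is to apply the maximum principle to the twisted scalar curvature $\ret - \trete$, exploiting that its reaction term along the twisted flow \refs{ometeq} is a perfect square. The key structural observation is that the twisting form $\ete$ is closed and \emph{independent of $t$}, so that, for each fixed $\varepsilon>0$ (where $\omet$ is a genuine smooth solution on the compact manifold $X$), the flow $\partial_t\omet = -(\ricet - \ete) - \omet$ behaves exactly like the normalized K\"ahler--Ricci flow with $\ricet$ replaced by the twisted Ricci form $\ricet - \ete$. Concretely, tracing \refs{ometeq} gives $\partial_t\log\omet^n = -(\ret - \trete) - n$, whence $\partial_t (\ricet)_{i\bar j} = \partial_i\partial_{\bar j}(\ret - \trete)$; combining this with the evolution of $\omet^{i\bar j}$ and using that $\ete$ does not evolve, a direct computation yields
\begin{align*}
	\dalet \left( \ret - \trete \right) = \left( \ret - \trete \right) + | \ricet - \ete |_{\omet}^2 .
\end{align*}

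Next I would invoke the pointwise Cauchy--Schwarz inequality $| \ricet - \ete |_{\omet}^2 \ge \tfrac1n \bigl( \tret(\ricet - \ete) \bigr)^2 = \tfrac1n (\ret - \trete)^2$, which upgrades the identity to the differential inequality
\begin{align*}
	\dalet \left( \ret - \trete \right) \ge \frac1n \left( \ret - \trete \right)^2 + \left( \ret - \trete \right).
\end{align*}
Applying the parabolic maximum principle to $m(t) \deq \min_X (\ret - \trete)(\cdot,t)$ and comparing with the autonomous ODE $y' = \tfrac1n y(y+n)$ (whose equilibria are $0$ and $-n$, and for which every trajectory starting in $(-\infty,0)$ stays above $\min(y(0),-n)$, since $y(y+n)>0$ for $y<-n$), I would obtain
\begin{align*}
	\ret - \trete \ge \min \Bigl\{ \min_X \bigl( R(\omtilez) - \mathrm{tr}_{\omtilez}\ete \bigr), \, -n \Bigr\} \son X .
\end{align*}
Observe that this lower barrier is uniform in $t$ on the whole existence interval and does not even use $T<\infty$, so the entire $\varepsilon$- and $t$-dependence is reduced to a single uniform lower bound on the twisted scalar curvature of the initial metric $\omtilez = \omz + k\ddb\rhoe$.

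It remains to establish this uniform initial estimate, which is the one place where the precise conical geometry enters. Setting $F_\varepsilon \deq \log( \omtilez^n\, \sobe / \Omega )$, which satisfies $|F_\varepsilon|\le \log\refconst{vol}$ by Lemma \refs{vollem}(a), the definitions of $\Omega$ and $\ete$ give the identity
\begin{align*}
	\mathrm{Ric}(\omtilez) - \ete = -\omhinf - \ddb F_\varepsilon ,
\end{align*}
so that, taking traces, $R(\omtilez) - \mathrm{tr}_{\omtilez}\ete = -\mathrm{tr}_{\omtilez}\omhinf - \mathrm{tr}_{\omtilez}(\ddb F_\varepsilon)$. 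The first term is uniformly bounded because $\omhinf$ is a fixed smooth form and $\omtilez \ge (1-k\beta\refconst{y}\refconst{s}^{2\beta}\refconst{z})\omz$ by \refs{omtilzlower}, so the problem reduces to the one-sided second-order bound $\mathrm{tr}_{\omtilez}(\ddb F_\varepsilon) \le C$ on the bounded potential $F_\varepsilon$. This is exactly the content of the explicit curvature computation for the regularized cone metric $\omz + k\ddb\rhoe$ in \cite[Section 3]{CampanaGuenanciaPaun13ConeSingHolTensor}, the same input already used for the initial bounds in Sections \refs{vetsect} and \refs{gvetsect}; together with the comparison above it produces the desired constant $\refconst{lower}$. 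This last step is the main obstacle: both $R(\omtilez)$ and $\mathrm{tr}_{\omtilez}\ete$ degenerate near $D$ as $\varepsilon\to 0$, and the crux is that forming their difference cancels the singular contribution along $D$, leaving a quantity bounded below uniformly in $\varepsilon$.
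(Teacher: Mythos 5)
The paper offers no proof of this proposition --- it is quoted from Edwards --- so your proposal has to be measured against the proof in that reference, and its skeleton is indeed the same: the evolution identity $\dalet(\ret-\trete)=(\ret-\trete)+|\ricet-\ete|^2_{\omet}$ (correct, because $\ete$ is closed and independent of $t$, and $\omet$ is a genuine smooth flow for each fixed $\varepsilon$), the Cauchy--Schwarz lower bound by $\tfrac1n(\ret-\trete)^2$, and the ODE comparison reducing everything to a lower bound at $t=0$ that is uniform in $\varepsilon$. Your identity $\mathrm{Ric}(\omtilez)-\ete=-\omhinf-\ddb F_\varepsilon$ with $F_\varepsilon=\log(\omtilez^n\,\sobe/\Omega)$ also checks out against the normalization $-\ricOm+(1-\beta)\ii R_h=\omhinf$.

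The gap is in the last step. You reduce the initial estimate to $\mathrm{tr}_{\omtilez}(\ddb F_\varepsilon)\le C$ and assert that this is ``exactly the content'' of \cite[Section 3]{CampanaGuenanciaPaun13ConeSingHolTensor}. It is not: what that section supplies is a uniform lower bound for the holomorphic bisectional curvature of $\omz+k\ddb\rhoe$, hence at best $R(\omtilez)\ge -C$. That alone cannot close the argument, because $\mathrm{tr}_{\omtilez}\ete$ is \emph{not} uniformly bounded above: by Lemma \refs{etelem}(a) the leading part of $\ete$ is $(1-\beta)\varepsilon^2\,\ii\langle\nabla s\wedge\nabla s\rangle_h/(\ste)^2$, whereas in the direction normal to $D$ the form $\omtilez$ is only of size $k\beta^2(\ste)^{\beta-1}\,\ii\langle\nabla s\wedge\nabla s\rangle_h$, so that $\mathrm{tr}_{\omtilez}\ete\sim\varepsilon^2(\ste)^{-1-\beta}$, which is of order $\varepsilon^{-2\beta}$ at points with $|s|_h^2\lesssim\varepsilon^2$. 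The cancellation you invoke between $R(\omtilez)$ and $\mathrm{tr}_{\omtilez}\ete$ is therefore real and indispensable, but it is not a consequence of the bisectional curvature bound; it requires the separate computation that $\ddb\log\bigl(\omtilez^n\,\sobe/\Omega\bigr)\le C\,\omtilez$ uniformly in $\varepsilon$, obtained by differentiating $\rhoe$ twice and matching each singular term against the corresponding singular component of $\omtilez$ (this is carried out in \cite{LiuZhang14CKRFFano} and in Section 4 of \cite{Edwards15ScalBoundCKRF}). As written, the one genuinely hard step of your proof is delegated to a reference that does not contain it, and the $C^0$ bound $|F_\varepsilon|\le\log\refconst{vol}$ that you do establish is of no help for a one-sided second-order bound.
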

	
Using this estimate, we can easily obtain the following upper bound. 
\begin{proposition}
	There exists a uniform constant $\refconst{ulapvet}>0$ which is independent of $\varepsilon$ and $t$ such that\const{ulapvet}
		\begin{align*}
		\lapet \vet \le \refconst{ulapvet}.
		\end{align*}
\end{proposition}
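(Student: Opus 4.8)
The plan is to read off $\lapet\vet$ directly from the trace identity of Proposition \refs{retprop} (b) and then bound the resulting terms one by one. Solving that identity for the Laplacian gives
\begin{align*}
	\lapet\vet = n\ett - \tromhT - \oett\bigl(\ret - \trete\bigr),
\end{align*}
so the upper bound reduces to controlling the three terms on the right-hand side.

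Two of these are immediate. Since $\ett = e^{t-T}\le 1$ on $[0,T)$, the first term satisfies $n\ett \le n$. The second term is harmless for an upper bound: by Proposition \refs{uetprop} we have $\tromhT = \uet \ge 0$, hence $-\tromhT \le 0$. The only genuinely delicate term is $-\oett(\ret - \trete)$, and here I would invoke the lower bound for the twisted scalar curvature in Proposition \refs{retlower}, namely $\ret - \trete \ge -\refconst{lower}$. Combining this with the elementary fact that the coefficient $\oett = 1-e^{t-T}$ lies in $[0,1)$ for $t\in[0,T)$ yields $\oett(\ret - \trete) \ge -\oett\,\refconst{lower} \ge -\refconst{lower}$, so that $-\oett(\ret - \trete) \le \refconst{lower}$. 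Note that both the nonnegativity $\oett\ge 0$ and the bound $\oett\le 1$ are used here, and both hold precisely because we approach the finite singular time $T$ from below.

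Putting the three estimates together gives
\begin{align*}
	\lapet\vet \le n + \refconst{lower} \ddeq \refconst{ulapvet},
\end{align*}
a constant independent of $\varepsilon$ and $t$, as required. The point to emphasize is that there is essentially no computational obstacle at this stage: the entire difficulty has been front-loaded into the lower scalar-curvature bound of Proposition \refs{retlower} (due to Edwards) together with the nonnegativity $\uet\ge 0$ coming from the parabolic Schwarz lemma. Once those two inputs are in hand, the argument is pure sign-chasing, the crucial structural feature being that $\vet$ was defined so that the scalar curvature $\ret$ enters the formula for $\lapet\vet$ only through the nonnegative, $[0,1)$-valued weight $\oett$.
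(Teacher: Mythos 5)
Your proof is correct and coincides with the paper's own argument: both solve the trace identity of Proposition \refs{retprop}~(b) for $\lapet\vet$ and combine the nonnegativity $\uet\ge 0$ with Edwards' lower bound from Proposition \refs{retlower} and the fact that the coefficient $1-e^{t-T}$ lies in $[0,1)$ to conclude $\lapet\vet\le n+\refconst{lower}$. The paper states this in one line; your write-up simply makes the same sign-chasing explicit.
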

	
\begin{proof}
By Proposition \refs{retprop}, Proposition \refs{retlower}, and $\uet \ge 0$, we have 
\begin{align*}
	\lapet \vet
		&=n\ett -\uet -\oett(\ret -\trete)
		\le n + \refconst{lower} \, \ddeq \, \refconst{ulapvet},
\end{align*}
which proves the assertion.
\end{proof}

\begin{proposition}\label{llapvetprop}
There exists a constant $\refconst{llapvet}>0$ independent of $\varepsilon$ and $t$ such that \const{llapvet}
	\begin{align*}
		\lapet \vet \ge -\dfrac{\refconst{llapvet}}{T-t}.
	\end{align*}
\end{proposition}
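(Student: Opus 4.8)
The plan is to prove the estimate in its equivalent, scale-invariant form $\oett\lapet\vet\ge -C$. Since $\oett=1-\ett$ satisfies $c_1(T-t)\le\oett\le T-t$ on $[0,T)$ for some $c_1>0$ (this uses $T<\infty$), this is equivalent to the assertion. It is worth stressing that this is the real crux of the paper: multiplying the trace identity of Proposition \refs{retprop}(b) by $\oett$ gives $\oett\lapet\vet=-\oett^2(\ret-\trete)+n\ett\oett-\oett\uet$, where the last two terms are bounded by Proposition \refs{uetprop}; hence a lower bound for $\oett\lapet\vet$ is exactly the sought upper bound $C/(T-t)^2$ for the scalar curvature. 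In particular one cannot feed the scalar curvature bound back in here, and a maximum principle must be run directly on $\oett\lapet\vet$.

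I would start from Lemma \refs{psietlem}(b) and substitute the expression for $\ricet-\ete$ furnished by Proposition \refs{retprop}(a). The term $\langle\ricet-\ete,\ddb\vet\rangle_{\omet}$ then produces a singular term $(\ett/\oett)\lapet\vet$, which upon weighting by $\oett$ is cancelled precisely by $\dot{\oett}\,\lapet\vet=-\ett\,\lapet\vet$; arranging this cancellation is the entire reason for the weight $\oett$, whose derivative equals $-\ett$. The result is the clean evolution
\begin{align*}
	\dalet\bigl(\oett\lapet\vet\bigr)
		= \oett\lapet\vet + \oett\lapet\uet
			- |\ddb\vet|_{\omet}^2 - \langle\omhT,\ddb\vet\rangle_{\omet}.
\end{align*}

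The obstruction is the reaction term $-|\ddb\vet|_{\omet}^2$ (morally $-|\ricet|^2$), which has the wrong sign for a lower bound and is uncontrollable by itself. The key device is to borrow from the gradient estimate: in Lemma \refs{psietlem}(a) the evolution of $\nvets$ contains $-\nnbvets=-|\ddb\vet|_{\omet}^2$ with the same sign, so subtracting a multiple of the bounded quantity $\nvets$ (bounded by Proposition \refs{nvetsprop}) flips it to $+|\ddb\vet|_{\omet}^2$. I would therefore apply the minimum principle to the test function $\oett\lapet\vet-c\,\nvets-\mu\,\uet$ with $c\ge2$ and $\mu$ large. The $\uet$-term is included to absorb the cross term $-2c\re\langle\nvet,\nuet\rangle_{\omet}$ arising from $\dalet\nvets$: the favorable term $-\nuets/\uet$ in Lemma \refs{paraboliclem}(b) contributes $+(\mu/\refconst{uet})\nuets$ after the sign change, dominating the $\nuets$ produced by Cauchy--Schwarz once $\mu$ is large relative to $c$. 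The second-order terms $(c-1)|\ddb\vet|_{\omet}^2-2\langle\omhT,\ddb\vet\rangle_{\omet}$, together with the matching cross term coming from $\oett\lapet\uet$ (estimated below via the parabolic Schwarz inequality of Lemma \refs{paraboliclem}(a)), are then bounded below by completing the square and using $|\omhT|_{\omet}\le\uet\le\refconst{uet}$, while $c\,\ete(\nvet,\nbvet)$ is bounded below by Lemma \refs{etelem}(a) and the bounds on $\uet$ and $\nvets$, and $\oett\le T-t$ keeps the $\uet^2$-contributions bounded.

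These estimates reduce the evolution, at a spatial minimum, to the linear differential inequality
\begin{align*}
	\dalet\bigl(\oett\lapet\vet - c\,\nvets - \mu\,\uet\bigr)
		\ge \bigl(\oett\lapet\vet - c\,\nvets - \mu\,\uet\bigr) - C .
\end{align*}
Since the test function is bounded at $t=0$ uniformly in $\varepsilon$ (as in Steps 2--3) and $T<\infty$, an integrating-factor comparison on $[0,T)$ keeps its minimum bounded below by a uniform constant; as $c\,\nvets,\mu\,\uet\ge0$, this yields $\oett\lapet\vet\ge -C$ and hence the claim. The main obstacle is exactly the second-order reaction term $-|\ddb\vet|_{\omet}^2$: it cannot be tamed from the $\lapet\vet$-equation alone, and the argument closes only by coupling with the gradient estimate of Step 3 and the trace estimate of Step 2, with the weight $\oett$ arranged to kill the singular $(\ett/\oett)\lapet\vet$ term; the finiteness of $T$ enters decisively in the final comparison.
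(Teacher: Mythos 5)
Your proof is correct, and although it runs on exactly the same inputs as the paper --- Lemma \refs{psietlem}, the parabolic Schwarz Lemma \refs{paraboliclem}, Propositions \refs{bdvetprop}, \refs{uetprop}, \refs{nvetsprop}, and the substitution of Proposition \refs{retprop}(a) into $\langle \ricet-\ete,\ddb\vet\rangle_{\omet}$ --- the test function and the endgame are genuinely different. The paper works with the quotient $\Phiet=(B-\lapet\vet)/(B-\vet)$ weighted by $T-t$, so the singular term $(\ett/\oett)\lapet\vet$ is only partially tamed: after weighting it leaves a linear term $\refconst{const}(B-\lapet\vet)$, which must then be dominated at the maximum point by the quadratic $\nnbvets\ge \frac{1}{n}(\lapet\vet)^2$ supplied through $2C_T\tilPsiet$. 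Your weight $\oett$, whose time derivative is exactly $-\ett$, cancels that singular linear term identically, so no quadratic domination is needed; you only need $(c-1)\nnbvets$ to absorb the cross term $-2\langle\omhT,\ddb\vet\rangle_{\omet}$ coming from the main equation and from $\oett\lapet\uet$, and the leftover $+\oett\lapet\vet$ on the right-hand side is disposed of by an integrating-factor comparison, which is where $T<\infty$ enters. Avoiding the quotient by $B-\vet$ also spares you the gradient cross term $\re\langle\nabla\Phiet,\nvet\rangle_{\omet}$ that the paper must carry through (and which vanishes at the extremum anyway). Both routes are sound; yours is arguably cleaner at this step, while the paper's version is the more standard quotient construction and its quadratic endgame does not depend on an exact cancellation. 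Two points worth making explicit in a full write-up: the equivalence $\oett\lapet\vet\ge -C\Leftrightarrow \lapet\vet\ge -C'/(T-t)$ uses $\oett=1-e^{-(T-t)}\ge c_1(T-t)$ on $[0,T)$ with $c_1=(1-e^{-T})/T$; and the uniform-in-$\varepsilon$ bound on your test function at $t=0$ requires the same appeal to \cite[Section 4]{CampanaGuenanciaPaun13ConeSingHolTensor} that the paper makes.
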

	
\begin{proof}
As in \cite[Section 3.3]{Zhang10ScalFTSingKRF}, we set 
\begin{align*}
\Phiet \deq \dfrac{B-\lapet \vet}{B-\vet},
\end{align*}
where $B>0$ is a sufficiently large uniform constant satisfying $B-\refconst{ulapvet}>0$, and $B-\refconst{v}>1$ so that the numerator and the denominator of $\Phiet$ are positive.
Straightforward calculations show that 
\begin{align}
\label{AAAA}	
	&\biggl(\dt - \lapet  \biggr) \Phiet \\
\label{aaa}	
	&= \dfrac{-1}{B-\vet}\lapet \vet +\dfrac{1}{ (B-\vet)^2 }(\uet -n )(B - \lapet \vet )\\
\label{bbb}	
	&\quad -\dfrac{1}{B-\vet}\Bigl(\langle \ricet -\ete , \ddb \vet \rangle_{\omet}+\lapet  \uet \Bigr)\\
\notag 
	&\quad -\dfrac{2}{ B-\vet } \re \langle \nabla \Phiet , \nvet \rangle_{\omet} .
\end{align}

We first estimate  (\refs{aaa}). By using $B-\vet >1$, $B-\lapet \vet>0$, and Proposition \refs{uetprop},  we have \const{Bvet}
\begin{align}\label{AAA}
	&\dfrac{-1}{B-\vet}\lapet \vet +\dfrac{1}{(B-\vet)^2}(\uet -n )(B-\lapet \vet)\\
\notag 
	&=\left( \dfrac{B-\lapet \vet}{B-\vet }+\dfrac{-B}{B- \vet }  \right)+ \dfrac{1}{(B-\vet)^2}(\uet -n )(B-\lapet \vet)\\
\notag 
	&\le \refconst{Bvet}(B-\lapet \vet),
\end{align}
where $\refconst{Bvet}\deq 1+\refconst{uet}>0$.

We next estimate (\refs{bbb}). By using Lemma \refs{paraboliclem} (a) and Proposition \refs{retprop} (a), we obtain \const{lapuet}
\begin{align*}
	&-\langle \ricet -\ete , \ddb \vet \rangle_{\omet} -\lapet \uet\\
	&\le - \langle \ricet -\ete , \ddb \vet + \omhT \rangle_{\omet} + \refconst{lapuet}\\
	&=\dfrac{1}{1-\ett }| \ddb \vet +\omhT|_{\omet}^2 -\dfrac{\ett}{1-\ett}(\lapet \vet + \uet ) + \refconst{lapuet},
\end{align*}
where $\refconst{lapuet}\deq \refconst{para} \refconst{uet}^2>0$.
By using Proposition \refs{uetprop}, the first term is estimated as follows:
\begin{align*}
	| \ddb \vet +\omhT|_{\omet}^2 
	&\le (1+\delta )\nnbvets + (1+1/\delta )|\omhT|_{\omet}^2 \\
	& \le (1+\delta )\nnbvets +\refconst{Constant} ,
\end{align*}
where $\delta >0$ is a uniform constant determined later and $\refconst{Constant} \deq  \left(1+1/\delta\right)\refconst{uet}^2 >0$. 
For the second term, we have
\begin{align*}
	 -\dfrac{\ett}{1-\ett}(\lapet \vet + \uet )
		 &=\dfrac{\ett}{1-\ett }(B-\lapet \vet)-\dfrac{B\ett}{1-\ett}\\
		 &\le \dfrac{1}{1-\ett }(B-\lapet \vet).
\end{align*}
Finally, we get\const{Constant}
\begin{align}\label{BBB}
	&-\dfrac{1}{B-\vet}\left(\langle \ricet -\ete , \ddb \vet \rangle_{\omet}+\lapet  \uet \right)\\
\notag 
	& \le \dfrac{C_T}{T-t}\left( \dfrac{1+\delta}{B-\vet}\nnbvets 
		+ \refconst{Constant}\right)
		+ \dfrac{C_T}{T-t}(B-\lapet \vet)+\refconst{lapuet},
\end{align}
where $C_T>0$ is a uniform constant satisfying
\begin{align*}
	\dfrac{1}{1-\ett}\le \dfrac{C_T}{T-t}
\end{align*}
for all $t \in \zt$.

Combining (\refs{AAAA}) with (\refs{AAA}) and (\refs{BBB}), we get\const{const}
\begin{align}\label{Estimate1}
\notag 
	&\dalet \Phiet\\
\notag 
	&\le \refconst{Bvet}(B-\lapet \vet)\\
\notag 
		&\quad +\dfrac{C_T}{T-t}\left( \dfrac{1+\delta}{B-\vet}\nnbvets + \refconst{Constant} \right) + \dfrac{C_T}{T-t}(B-\lapet \vet)+\refconst{lapuet}\\
\notag 
		&\quad -\dfrac{2}{B-\vet }\re \langle \nabla \Phiet , \nabla \vet \rangle_{\omet }\\
\notag 
	&\le \dfrac{\refconst{const} }{T-t}+\dfrac{\refconst{const}}{T-t}(B-\lapet \vet)+\dfrac{C_T}{T-t}\dfrac{1+\delta}{B-\vet}\nnbvets\\
\notag 
	&\quad -\dfrac{2}{B-\vet }\re \langle \nabla \Phiet , \nabla \vet \rangle_{\omet }, \\
	&\dalet (T-t)\Phiet=-\Phiet +(T-t)\dalet \Phiet\\
\notag 	
	&\le (T-t)\dalet \Phiet\\
\notag 
	&\le \refconst{const} + \refconst{const}(B-\lapet \vet)+C_T\dfrac{1+\delta}{B-\vet}\nnbvets\\
\notag 
	&\quad  -\dfrac{2}{B-\vet }\re \langle \nabla \left( (T-t)\Phiet\right)  , \nabla \vet \rangle_{\omet }.
\end{align}

We set $\tilPsiet \deq \dfrac{\nvets}{B-\vet}$. 
Combining Lemma \refs{psietlem} (c) with (\refs{npsiet}), (\refs{AA}), Proposition \refs{uetprop} and Proposition \refs{nvetsprop}, we have\const{tilpsiconst}
\begin{align}\label{Estimate2}
	&\dalet \tilPsiet\\
\notag 
	&= \dfrac{1}{B-\vet}\left( \nvets  - \nnvets - \nnbvets- \ete ( \nvet , \nbvet ) \right) \\
\notag 
			&\quad + \dfrac{1}{(B-\vet)^2}(\uet - n)\nvets 
			  - \dfrac{2}{B-\vet} 
				\re\langle \nabla (\tilPsiet - \uet) , \nvet \rangle_{\omet}\\
\notag 
	&\le \refconst{tilpsiconst}
			-\dfrac{\nnbvets}{B-\vet}
			- \dfrac{2}{B-\vet} \re\langle \nabla (\tilPsiet - \uet) , \nvet \rangle_{\omet},
\end{align}
where $\refconst{tilpsiconst}\deq \refconst{nvets}(1+(1-\beta)\refconst{y}\refconst{uet}+\refconst{uet})>0$.

We next estimate $\uet$.
We first note that  the following estimate holds: 
\begin{align}\label{EstimateFor3}
	\dfrac{4}{B-\vet}\re \langle \nabla \uet , \nabla \vet \rangle_{\omet }
		\le \delta \nuets +\dfrac{4}{\delta}\nvets.
\end{align}
By using Lemma \refs{paraboliclem} (b), Proposition \refs{uetprop}, Proposition \refs{nvetsprop} and (\refs{EstimateFor3}), we get \const{lapuett}
\begin{align}\label{Estimate3}
		\dalet \uet 
		&\le \refconst{uet} +\refconst{para}\refconst{uet}^2- \dfrac{1}{\refconst{uet}} \nuets\\
\notag 
		&\le - \dfrac{4}{B-\vet}\re \langle \nabla \uet , \nabla \vet \rangle_{\omet } +\refconst{lapuett},
\end{align}
where we take $0<\delta <1/\refconst{uet}$ and $\refconst{lapuett}\deq \refconst{uet} +\refconst{para}\refconst{uet}^2 +4\refconst{nvets}/\delta>0$.

Combining (\refs{Estimate1}), (\refs{Estimate2}), and (\refs{Estimate3}), we have\const{cconst}
\begin{align*}
&\dalet \left((T-t)\Phiet +2C_T \tilPsiet +2C_T \uet \right)\\
	&\le \refconst{cconst} + \refconst{cconst}(B-\lapet \vet) -C_T\dfrac{1-\delta}{B+\refconst{v}}\nnbvets\\
	&\quad-\dfrac{2}{B-\vet }\re \left\langle \nabla  \left((T-t)\Phiet +2C_T \tilPsiet +2C_T \uet \right) , \nabla \vet \right\rangle_{\omet }.
\end{align*}
The uniform boundedness of $(T-t)\Phiet +2C_T \tilPsiet +2C_T \uet$ at $t=0$ follows from \cite[Section 4]{CampanaGuenanciaPaun13ConeSingHolTensor}, Proposition \refs{bdvetprop}, Proposition \refs{uetprop} and Proposition \refs{nvetsprop}. 
If $(T-t)\Phiet +2C_T \tilPsiet +2C_T \uet$ achieves maximum at $(x_0,t_0) \in X\times (0,T)$, we have the following estimate at this point:
\begin{align*}
	0
	&\le \refconst{cconst} + \refconst{cconst}(B-\lapet \vet) -C_T\dfrac{1-\delta}{B+\refconst{v}}\nnbvets\\
	&\le  \refconst{cconst} + \refconst{cconst}(B-\lapet \vet) -C_T\dfrac{1-\delta}{B+\refconst{v}} \left(\dfrac{1}{n}(B-\lapet \vet)^2-\dfrac{B^2}{n}\right).
\end{align*}
Therefore, at this point, there exists a constant $\refconst{ptlapetvet}>0$ satisfying \const{ptlapetvet}
\begin{align*}
	-\lapet \vet\le \refconst{ptlapetvet} \sat (x_0,t_0)
\end{align*}
which is independent of $\varepsilon$, $t$, and $(x_0,t_0)$. Combining Proposition \refs{bdvetprop}, Proposition \refs{nvetsprop}, and Proposition \refs{uetprop}, we obtain the uniform upper boundedness of $(T-t)\Phiet +2C_T \tilPsiet +2C_T \uet$ on $X \times \zt$. 
Therefore we get the lower bound for  $\lapet \vet$.

\end{proof}
\noindent
\textit{Proof of Theorem \refs{thmA}:} By Proposition \refs{retprop}, and Proposition \refs{llapvetprop}, we have
\begin{align*}
\ret - \trete
	&=\dfrac{1}{1-\ett}\left( -\lapet \vet + n\ett -\uet \right)\\
	&\le \dfrac{C_T}{T-t}\left( \dfrac{\refconst{llapvet}}{T-t} +n\right)
	\le \dfrac{C}{(T-t)^2},
\end{align*}
where $C>0$ does not depend on $\varepsilon$ and $t$. Therefore, by taking $\varepsilon_i\rightarrow 0$, we get the assertion. \hfill $\square$

\vspace{2mm}
\noindent 
\textbf{Acknowledgment. }The author would like to express his gratitude to his supervisor Prof. Shigeharu Takayama for various comments.
This work is supported by the Program for Leading Graduate Schools, MEXT, Japan.

\bibliographystyle{amsalphaurlmod}
\bibliography{reference_modified}
\end{document}